\numberwithin{equation}{section}
\theoremstyle{plain} %% This is the default, anyway
\newtheorem{theorem}{Theorem}[section]
\newtheorem{lemma}[theorem]{Lemma}
\newcommand\vx{\node[circle, draw, fill=black, inner sep=0pt, minimum width=6pt]}
\begin{document}

\title[Abstract Regular Polytopes of Finite Irreducible Coxeter Groups]{Abstract Regular Polytopes of \\ Finite Irreducible Coxeter Groups} 

\author{Malcolm Hoong Wai Chen} \address{Department of Mathematics, University of Manchester, Manchester, United Kingdom.} \email{malcolmhoongwai.chen@postgrad.manchester.ac.uk}
\author{Peter Rowley} \address{Department of Mathematics, University of Manchester, Manchester, United Kingdom.} \email{peter.j.rowley@manchester.ac.uk}

\subjclass{52B11, 20B25, 20F55}
\keywords{Abstract regular polytopes, string C-groups, finite irreducible Coxeter groups}

\begin{abstract}

Here, for $W$ the Coxeter group $\mathrm{D}_n$ where $n > 4$, it is proved that the maximal rank of an abstract regular polytope for $W$ is $n - 1$ if $n$ is even and $n$ if $n$ is odd. Further it is shown that $W$ has abstract regular polytopes of rank $r$ for all $r$ such that $3 \leq r \leq n - 1$, if $n$ is even, and $3 \leq r \leq n$, if $n$ is odd. The possible ranks of abstract regular polytopes for the exceptional finite irreducible  Coxeter groups are also determined.

\end{abstract}

\maketitle

\section{Introduction}\label{introduction}

Finite Coxeter groups appear in many guises in the mathematics literature -- as Weyl groups of semisimple Lie algebras, reflection groups, and automorphism groups of regular polytopes to mention a few. It is the last area that is of interest here.  We recall that $\mathrm{Sym}(n+1)$, the symmetric group of degree $n + 1$, also the Coxeter group of type $\mathrm{A}_n$ is the automorphism group of a regular $n$-simplex, while the Coxeter group of type $\mathrm{B}_n$ is the automorphism group of the $n$-cube and its dual, the $n$-cross polytope.

In this paper we examine abstract regular polytopes of the finite irreducible Coxeter groups $W$.  As is documented in McMullen and Schulte \cite{arpbible}, this is equivalent to investigating the C-strings of $W$, and we follow that approach. A \emph{C-string} of a group $G$ is a set of involutions $S = \{s_1, \dots , s_r \}$ of $G$ which generates the group. Additionally, setting $I = \{ 1, \dots, r \}$ and,  for $J \subseteq I, W_J = \langle s_j \;| \;  j \in J \rangle$,  they must satisfy

\begin{enumerate}
\item[(i)] for all $J, K \subseteq I$, $W_J \cap W_K = W_{J \cap K}$  (intersection property); and
\item[(ii)] $s_is_j = s_js_i$ for all $i, j \in I$ with $|i - j| \geq 2$ (string property).
\end{enumerate}

We refer to $r$ as the \emph{rank} of the C-string, equivalently, the rank of the associated abstract regular polytope, and also call $(G,S)$ a \emph{string C-group}. The maximal rank for a C-string of $W$ will be denoted by $r_{\mathrm{max}}(W)$. If $W$ is either of type $\mathrm{A}_n$ or $\mathrm{B}_n$, then their defining presentation in terms of fundamental (or simple) reflections yields a C-string of rank $n$ (see Section 5.5 of \cite{coxbible} for the intersection property).  Moreover $r_{\mathrm{max}}(W) = n$  when $W \cong \mathrm{A}_n$, a consequence of a recent deep result by Whiston \cite{whiston} which uses the classification of finite simple groups.  For $W \cong \mathrm{B}_n$, $r_{\mathrm{max}}(W) = n$ when $n$ is even and $r_{\mathrm{max}}(W) = n + 1$ when $n$ is odd. In the latter case the corresponding C-string is degenerate, meaning the string is disconnected.  See Theorem \ref{transrank}(ii) for more details. If we insist on only considering non-degenerate strings, then we have 
$r_{\mathrm{max}}(W) = n$.  On the other hand, $\mathrm{I}_2(m)$, identified as $\mathrm{Dih}(2m)$ are also well known to only exhibit rank two polytopes given by the $m$-gon. Now in the remaining infinite family of finite irreducible Coxeter groups we have $\mathrm{D}_n$ whose Dynkin diagrams are not strings. Our first result concerns the maximal rank of C-strings for this family.  Noting that $\mathrm{D}_4$ has no C-strings, we have the following theorem.  

\begin{theorem}\label{maximalrank}
Suppose that $W$ is the Coxeter group $\mathrm{D}_n$ with $n \geq 5$. If $n$ is even, then $r_{\mathrm{max}}(W) = n - 1$ and if $n$ is odd, then $r_{\mathrm{max}}(W) = n$.
\end{theorem}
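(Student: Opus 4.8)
The plan is to establish lower and upper bounds on $r_{\mathrm{max}}(\mathrm{D}_n)$ separately, splitting on the parity of $n$. I first record that for $n\geq 5$ the group $\mathrm{D}_n$ is directly indecomposable: its unique minimal normal subgroup is the central $\langle -I\rangle$ if $n$ is even and the translation subgroup $\cong C_2^{n-1}$ (irreducible as an $\mathrm{Sym}(n)$-module) if $n$ is odd, since any other minimal normal subgroup is forced, by commutator arguments, into the centraliser of the translation subgroup, which equals that subgroup. Hence every C-string of $\mathrm{D}_n$ is connected. For the lower bound, when $n$ is odd the element $-I$ is a central involution of $\mathrm{B}_n$ outside $\mathrm{D}_n$, so $\mathrm{B}_n=\langle -I\rangle\times\mathrm{D}_n$; by Theorem \ref{transrank}(ii) $\mathrm{B}_n$ has a C-string of rank $n+1$, and by indecomposability together with the Krull--Schmidt theorem its diagram must be the disjoint union of an isolated node spanning $\langle -I\rangle$ and a connected path of length $n$ spanning $\mathrm{D}_n$; deleting the isolated node gives a C-string of $\mathrm{D}_n$ of rank $n$. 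When $n$ is even I exhibit an explicit C-string of rank $n-1$, given by $n-1$ signed permutations of $\mathrm{D}_n$ whose non-commuting graph is a path, and check directly the string property, the equality $\langle S\rangle=\mathrm{D}_n$, and the intersection property via the standard inductive criterion of \cite{arpbible}.

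For the upper bound, the case $n$ odd is quick: if $(\mathrm{D}_n,S)$ is a C-string of rank $r$ then, since $-I$ is central with $\mathrm{B}_n=\langle -I\rangle\times\mathrm{D}_n$, adjoining $-I$ to $S$ as a terminal node yields a C-string of $\mathrm{B}_n$ of rank $r+1$ --- the string property is trivial, generation is clear, and the intersection property follows from that of $S$ by distributing intersections over the direct product --- so $r+1\leq r_{\mathrm{max}}(\mathrm{B}_n)=n+1$ and $r\leq n$. The case $n$ even is the substance of the theorem, and I prove $r_{\mathrm{max}}(\mathrm{D}_n)\leq n-1$ by induction on $n$, the base cases being $\mathrm{D}_4$ (which has no C-string) and $\mathrm{D}_6$ (verified directly), with the odd values $r_{\mathrm{max}}(\mathrm{D}_m)=m$ for $m<n$ already in hand. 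Given a C-string $(\mathrm{D}_n,S)$ of rank $r$, the subgroups $\langle s_1,\dots,s_{r-1}\rangle$ and $\langle s_2,\dots,s_r\rangle$ are C-strings of rank $r-1$ contained in proper, hence maximal, subgroups of $\mathrm{D}_n$, while $\langle s_1\rangle\times\langle s_3,\dots,s_r\rangle\leq\mathrm{D}_n$ displays a subgroup of $\mathrm{D}_n$ that is a direct product with a $C_2$ factor carrying a C-string of rank $r-2$.

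One then runs through the maximal subgroups of $\mathrm{D}_n$ in its natural degree-$2n$ action --- the intransitive ones (subset stabilisers, essentially $(\mathrm{B}_k\times\mathrm{B}_{n-k})\cap\mathrm{D}_n$), the transitive imprimitive ones (block stabilisers, essentially $(\mathrm{B}_k\wr\mathrm{Sym}(n/k))\cap\mathrm{D}_n$), and the bounded list of primitive ones (including copies of $\mathrm{Sym}(n)$, and of $\mathrm{Sym}(n+1)$ for small $n$) --- and bounds the rank of any C-string sitting inside each, using the inductive hypothesis on $\mathrm{D}$-groups, the known values $r_{\mathrm{max}}(\mathrm{Sym}(m))=m-1$ and $r_{\mathrm{max}}(\mathrm{B}_m)\in\{m,m+1\}$, and the fact that a C-string of a direct product either splits across the factors (when disconnected) or projects onto C-strings of the factors.

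I expect the hardest step to be precisely this last one, which needs more than a crude rank count: the maximal subgroup $\cong\mathrm{B}_{n-1}$ already carries C-strings of rank $n$ (since $n-1$ is odd), so one cannot merely write $r-1\leq n-1$. Instead one uses indecomposability to pin down that a C-string of maximal rank inside such a subgroup is the degenerate one with a terminal node spanning its centre $\langle -I_{n-1}\rangle$, and then shows that this configuration cannot be extended to a C-string of the strictly larger group $\mathrm{D}_n$ by a single further involution without breaking the intersection property or the generation requirement --- exactly because, $n$ being even, $-I$ already lies in $\mathrm{D}_n$ and so cannot be peeled off as a terminal node the way it can for $\mathrm{B}_m$ with $m$ odd. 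Discharging this for every maximal subgroup type gives $r\leq n-1$, completing the proof.
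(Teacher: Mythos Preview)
Your treatment of the three ``easy'' quarters of the theorem is sound, though it differs from the paper. For the lower bound when $n$ is odd you extract a rank~$n$ string from the degenerate rank~$(n+1)$ string of $\mathrm{B}_n$ via $\mathrm{B}_n=\langle -I\rangle\times\mathrm{D}_n$, whereas the paper writes down explicit permutations $t_1,\dots,t_n$ in $\mathrm{Sym}(2n)$ of Schl\"afli type $\{4,3^{n-2}\}$ and checks everything by hand. For the upper bound when $n$ is odd you pad a $\mathrm{D}_n$ string by $-I$ to land in $\mathrm{B}_n$; the paper instead applies Theorem~\ref{transrank} directly to $\mathrm{D}_n\leq\mathrm{Sym}(2n)$, which gives $r_{\max}(\mathrm{D}_n)\le n$ in one line for all $n$. (A small quibble: Krull--Schmidt only gives that the connected rank~$n$ piece is \emph{isomorphic} to $\mathrm{D}_n$, not that it equals the standard copy; that suffices, but ``spanning $\mathrm{D}_n$'' overstates it.)

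The genuine gap is in the $n$ even upper bound. The paper does \emph{not} argue by induction through maximal subgroups. It first gets $r_{\max}\le n$ from Theorem~\ref{transrank}, then assumes a rank~$n$ string $\{s_1,\dots,s_n\}$ exists and passes to the quotient $\overline W=W/N\cong\mathrm{Sym}(n)$. Choosing $T\subseteq S$ so that $\overline T$ is an independent generating set of $\overline W$, Whiston's bound forces $|\overline T|\le n-1$, connectivity of $\overline T$ is forced by simplicity considerations, and then a short case analysis on $|\overline T|\in\{n-1,n-2\}$ (using Cameron--Cara to identify $\overline T$ with Coxeter generators when $|\overline T|=n-1$, and centraliser computations in $N$) yields a contradiction. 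The whole argument is under a page.

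Your maximal--subgroup induction, by contrast, is only sketched at exactly the point where it bites. You correctly flag that $M\cong\mathrm{B}_{n-1}$ has $r_{\max}(M)=n$, so the crude bound $r-1\le r_{\max}(M)$ is useless; but your proposed fix (``the rank-$n$ C-string of $\mathrm{B}_{n-1}$ is degenerate with terminal node $-I_{n-1}$, and this cannot be extended'') does not match the situation: what sits inside $M$ is $\langle s_1,\dots,s_{n-1}\rangle$, a rank~$(n-1)$ string, and $\mathrm{B}_{n-1}$ has many connected rank~$(n-1)$ strings (e.g.\ its own Coxeter string), so no degeneracy is forced. Similarly, $M\cong\mathrm{Sym}(n)$ has $r_{\max}=n-1$, so $\langle s_1,\dots,s_{n-1}\rangle$ could be all of $\mathrm{Sym}(n)$; ruling out an extension by $s_n$ then requires exactly the Cameron--Cara identification and the centraliser calculation $C_N(\mathrm{Sym}(n-1))=Z(W)$ that the paper performs in its steps (3.1.3)--(3.1.4). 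The intransitive maximals $(\mathrm{B}_k\times\mathrm{B}_{n-k})\cap\mathrm{D}_n$ add further cases you have not addressed. In short, your inductive scheme is not wrong in spirit, but as written it is a plan rather than a proof, and carrying it out would likely reconstruct the paper's quotient argument inside the case analysis. I would replace this part with the paper's direct argument through $W/N$.
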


The question as to the existence of abstract regular polytopes for intermediate rank values is the subject of the next theorem.

\begin{theorem}\label{intermediateranks} Suppose that $W$ is the Coxeter group $\mathrm{D}_n$ with $n \geq 5$.  Then $W$ has a C-string of rank $r$ for all $r$ with $3 \leq r \leq r_{\mathrm{max}}(W)$.

\end{theorem}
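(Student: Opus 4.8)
The plan is to establish Theorem~\ref{intermediateranks} by exhibiting, for each admissible rank $r$, an explicit C-string generating $W=\mathrm{D}_n$. Throughout I would realise $\mathrm{D}_n$ as the group of signed permutations of $\{1,\dots,n\}$ effecting an even number of sign changes, so that each generator is pinned down by how it permutes the $2n$ points $\pm1,\dots,\pm n$. The largest value $r=r_{\mathrm{max}}(W)$ is already produced by the construction proving Theorem~\ref{maximalrank}, so the task reduces to realising every $r$ with $3\le r\le r_{\mathrm{max}}(W)-1$. The guiding idea for this middle range is to take a C-string of rank $r-1$ generating a \emph{symmetric group} $\mathrm{Sym}(n)\le\mathrm{D}_n$ and to enlarge it by adjoining one ``double negation'' involution.

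Concretely, for $4\le r\le r_{\mathrm{max}}(W)-1$ I would fix a $2$-element subset $B\subset\{1,\dots,n\}$ and choose (sign-free) involutions $t_1,\dots,t_{r-1}$ generating $\mathrm{Sym}(n)$, forming a string, and arranged so that $t_1,\dots,t_{r-2}$ each stabilise $B$ setwise while $t_{r-1}$ does not; such C-strings of $\mathrm{Sym}(n)$ should be available throughout the required range of ranks, and in any case can be written down directly as products of transpositions laid out along a path. Then set $t_r$ to be the involution negating the two coordinates in $B$, and put $S_r=\{t_1,\dots,t_r\}$. Two of the three defining properties are then essentially built in. The string property holds because $t_r$ commutes with $t_1,\dots,t_{r-2}$ exactly on account of each stabilising $B$, while the remaining commutations are inherited from the $\mathrm{Sym}(n)$-string. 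For generation, $\langle t_1,\dots,t_{r-1}\rangle=\mathrm{Sym}(n)$ is $2$-transitive, so the $\mathrm{Sym}(n)$-conjugates of $t_r$ exhaust the ``negate a pair'' involutions and hence generate the even-weight subgroup $C_2^{\,n-1}$; therefore $\langle S_r\rangle=C_2^{\,n-1}\rtimes\mathrm{Sym}(n)=\mathrm{D}_n$.

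The substantive point is the intersection property, and here I would avoid checking $W_J\cap W_K=W_{J\cap K}$ for all $J,K$ directly and instead invoke the standard inductive criterion for string C-groups (McMullen and Schulte \cite{arpbible}): granting that $\{t_1,\dots,t_{r-1}\}$ and $\{t_2,\dots,t_r\}$ are themselves string C-groups, it suffices to prove the single identity $\langle t_1,\dots,t_{r-1}\rangle\cap\langle t_2,\dots,t_r\rangle=\langle t_2,\dots,t_{r-1}\rangle$. This is where the signed-permutation picture pays off: the left-hand group is $\mathrm{Sym}(n)$, all of whose elements are sign-free, whereas $\langle t_2,\dots,t_r\rangle=N\rtimes\langle t_2,\dots,t_{r-1}\rangle$, with $N\le C_2^{\,n-1}$ the normal closure of $t_r$, and an element of it is sign-free precisely when its $N$-component is trivial, i.e.\ precisely when it lies in $\langle t_2,\dots,t_{r-1}\rangle$; the identity follows at once. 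The two hypotheses of the criterion are supplied respectively by the choice of the $\mathrm{Sym}(n)$-ingredient and by running the same argument one rank lower inside the subgroup $\langle t_2,\dots,t_r\rangle$, giving an induction on $r$.

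Finally the loose ends. Rank $3$ falls outside this scheme, since two involutions cannot generate a symmetric group, and would be handled by an explicit ad hoc construction of a regular polyhedron for $\mathrm{D}_n$ (with all three generators necessarily carrying sign changes), the smallest few values of $n$ being verified separately, if need be by computer algebra. I expect the genuine difficulties to be: (a) producing, uniformly in $n$ and in every intermediate $r$, a string C-group of $\mathrm{Sym}(n)$ whose initial generators all stabilise a common $2$-subset — this needs a flexible construction, not merely the Coxeter string; and (b) the rank-$3$ case together with the even/odd-$n$ dichotomy, the point being that the central element $-I$ lies in $\mathrm{D}_n$ exactly when $n$ is even, which is precisely what makes $r_{\mathrm{max}}(W)$ differ by one and forces the two extreme ranks to be argued according to the parity of $n$.
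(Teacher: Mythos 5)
Your plan for the middle range $4 \le r \le r_{\mathrm{max}}(W)-1$ is essentially the paper's own construction: the rank-$(r-1)$ C-strings of $\mathrm{Sym}(n)$ with the initial generators stabilising a common $2$-subset are exactly the Fernandes--Leemans ``appended path'' strings recorded as Theorem \ref{skeleton}; the adjoined generator is the double sign change on that $2$-subset (Lemma \ref{csr2}); generation is the paper's Lemma \ref{intN}; and your ``sign-free versus sign-change'' computation of the intersection is precisely Lemma \ref{ipdn}, proved there via Lemma \ref{ip2} and the Dedekind law (this one-step route is cleaner than your proposed induction through Lemma \ref{ip}(i), but both work, since every parabolic $\langle t_k,\dots,t_{r-1}\rangle$ of the $\mathrm{Sym}(n)$-string is itself a string C-group). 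Two small debts remain in this part: you must actually check, with the parity bookkeeping the paper does by splitting into $r$ odd and $r$ even and moving the pair $B$ accordingly, that it is $t_{r-2}$ and not $t_{r-1}$ that stabilises $B$ setwise; and note that nothing here depends on the parity of $n$ itself, so your scheme also handles the odd-$n$ intermediate ranks, where the paper instead reduces rank from its maximal string.

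The genuine gap is rank $3$. Your scheme cannot reach it (two involutions never generate $\mathrm{Sym}(n)$ for $n\ge 4$), and you defer it to ``ad hoc constructions, if need be by computer algebra''; but the theorem asserts a rank-$3$ C-string of $\mathrm{D}_n$ for every $n \ge 5$, an infinite family, so checking small $n$ by machine does not close the case — a uniform construction or a uniform reduction is required. The paper supplies this in two ways: for odd $n$, the rank-$n$ string of Schl\"{a}fli type $\{4,3^{n-2}\}$ (Theorem \ref{nodd}) feeds into the Brooksbank--Leemans rank reduction theorem (Theorem \ref{rankred}), which yields every rank down to $3$; for even $n$, Lemma \ref{csr1} exhibits an explicit rank-$3$ string of Schl\"{a}fli type $\{12,n-1\}$, with the group identified again via Lemma \ref{intN}. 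Your parenthetical guess that in rank $3$ ``all three generators necessarily carry sign changes'' is also off the mark: in the paper's example only $t_1$ involves sign changes, while $t_2,t_3$ are sign-free; what your argument really shows is only that the sign-free generators cannot already generate the whole of $\mathrm{Sym}(n)$. Until a rank-$3$ construction valid for all $n$ (or an appeal to rank reduction from one of your higher-rank strings with suitable Schl\"{a}fli type) is in place, the proof of Theorem \ref{intermediateranks} is incomplete.
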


To complete the picture we consider the exceptional finite irreducible Coxeter groups.

\begin{theorem}\label{exceptional} Suppose that $W$ is an exceptional finite irreducible Coxeter group.
\begin{enumerate}

\item[(i)] If $W$ is one of $\mathrm{I}_2(m), \mathrm{H}_3, \mathrm{H}_4, \mathrm{F}_4$, then $r_{\mathrm{max}}(W)$ is the Coxeter rank of $W$.
\item[(ii)] If $W$ is one of $\mathrm{E}_6, \mathrm{E}_7, \mathrm{E}_8$, then $r_{\mathrm{max}}(W)$ is 5, 6, and 7, respectively.

\end{enumerate}

\end{theorem}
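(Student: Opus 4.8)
The plan is to establish the lower bounds by displaying explicit C-strings and the upper bounds by a case analysis that is computer-assisted for the larger groups.

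\emph{Lower bounds.} When $W$ has type $\mathrm{I}_2(m)$, $\mathrm{H}_3$, $\mathrm{H}_4$ or $\mathrm{F}_4$ the Coxeter diagram is already a path, so the simple reflections of $W$ taken in the natural order form a C-string whose rank is the Coxeter rank; the string property is immediate and the intersection property is the standard fact about standard parabolic subgroups of a finite Coxeter group (Section 5.5 of \cite{coxbible}). For $\mathrm{E}_6$, $\mathrm{E}_7$ and $\mathrm{E}_8$ I would instead exhibit explicit C-strings of ranks $5$, $6$ and $7$: one writes down suitable generating involutions -- for instance obtained by modifying a C-string of a large parabolic subgroup so that it generates all of $W$ -- and checks the string property by inspection and the intersection property by a short direct computation. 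For $\mathrm{E}_7$ there is a shorter route via $W(\mathrm{E}_7)\cong\langle -1\rangle\times\mathrm{Sp}_6(2)$ with $-1$ central: a connected rank-$5$ C-string of $\mathrm{Sp}_6(2)$ (such exist), together with $-1$ as an extra end-node, is a degenerate rank-$6$ C-string of $W(\mathrm{E}_7)$.

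\emph{Upper bounds.} One must show that $W$ has no C-string of rank $r_{\max}(W)+1$. The dihedral case $\mathrm{I}_2(m)$ is classical. For $\mathrm{H}_3$, $\mathrm{H}_4$, $\mathrm{F}_4$ and $\mathrm{E}_6$ the group has order at most $51840$, and an exhaustive search settles it: run over representatives of the few conjugacy classes of involutions for the first generator, extend one involution at a time subject to the string property, and reject as soon as the intersection property fails. For $\mathrm{E}_7$ and $\mathrm{E}_8$, where a naive search is infeasible, I would argue via the parabolic structure of a putative maximal-rank C-string. If $(W,S)$ is a C-string of rank $n$, the Coxeter rank of $\mathrm{E}_n$, then $H:=\langle s_1,\dots,s_{n-1}\rangle$ (with the generating set $\{s_1,\dots,s_{n-1}\}$) is a C-string of rank $n-1$; moreover $H$ is a proper subgroup of $W$ -- since $s_n\notin H$, by the intersection property applied to $\{1,\dots,n-1\}$ and $\{n\}$ -- the involution $s_n$ centralises $W_{\{1,\dots,n-2\}}$, and $W=\langle H,s_n\rangle$. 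The plan is then: (a) list, up to conjugacy, the proper subgroups $H$ of $W$ admitting a C-string of rank $n-1$, a short list cut down by $|H|\geq 2^{\,n-1}$, by $H$ being generated by $n-1$ involutions, by the inductive knowledge (Theorems \ref{maximalrank} and \ref{intermediateranks}, together with the cases already settled) of which groups carry C-strings of rank $n-1$, and by the fact that $H$ lies in a maximal subgroup of $W$, whose conjugacy classes are known; and (b) for each candidate $H$, show that $C_W(W_{\{1,\dots,n-2\}})$ contains no involution $t\notin H$ with $\langle H,t\rangle=W$ for which $\{s_1,\dots,s_{n-1},t\}$ satisfies the intersection property. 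A preliminary remark streamlines this: $W(\mathrm{E}_8)$ is directly indecomposable (its centre is $\langle -1\rangle\cong\mathbb{Z}_2$ and it has no nontrivial normal subgroup with trivial centre), so every C-string of $W(\mathrm{E}_8)$ is non-degenerate; and for $W(\mathrm{E}_7)\cong\langle -1\rangle\times\mathrm{Sp}_6(2)$ the degenerate C-strings are exactly those obtained by adjoining $-1$ as an end-node to a C-string of $\mathrm{Sp}_6(2)$, so the degenerate case contributes rank at most $1+r_{\max}(\mathrm{Sp}_6(2))=6$ and only the non-degenerate C-strings of $W(\mathrm{E}_7)$ need the subgroup analysis.

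The main obstacle is $\mathrm{E}_8$. There the candidate subgroups $H$ in step (a) range over several families -- for instance subgroups of $W(\mathrm{D}_8)$, of $W(\mathrm{A}_1+\mathrm{E}_7)$, of $W(\mathrm{A}_2+\mathrm{E}_6)$, of $W(\mathrm{A}_8)\cong\mathrm{Sym}(9)$, of $W(\mathrm{A}_4+\mathrm{A}_4)$, and of the non-reflection maximal subgroups of $W(\mathrm{E}_8)$ -- and for each one must both pin down which rank-$7$ C-strings it contains and analyse the relevant centraliser inside a group of order $696729600$. Carrying out step (b) uniformly, and keeping the computer-assisted verification within reach by working throughout with the action on the $\mathrm{E}_8$ root system, conjugacy of involutions, and the structure of centralisers of parabolic subgroups, is where the real work lies.
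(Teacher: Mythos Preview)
The paper does not argue as you propose. Its entire treatment of Theorem~\ref{exceptional} is the table in Section~\ref{census}: a complete \textsc{Magma} enumeration of all C-strings for each exceptional group, which simultaneously establishes the lower bounds (polytopes exist at the stated ranks) and the upper bounds (none exist above them).

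Your premise that ``a naive search is infeasible'' for $\mathrm{E}_7$ and $\mathrm{E}_8$ is therefore mistaken; efficient enumeration algorithms (see, e.g., \cite{p37}) handle groups of this size, and the paper ran them in full, finding for instance $63$ rank-$7$ polytopes for $\mathrm{E}_8$ up to isomorphism and duality and none of higher rank. Your structural alternative---explicit witnesses for the lower bounds and a maximal-subgroup/centraliser analysis for the upper bounds---is a legitimate programme and would give more insight into \emph{why} the bounds hold, but it is not a proof as written: you yourself concede that the $\mathrm{E}_8$ case ``is where the real work lies'' and is not carried out. There is also a logical gap in your framing of the upper bounds: showing that $W$ has no C-string of rank $r_{\max}(W)+1$ does not by itself determine $r_{\max}(W)$, since the existence of a rank-$r$ C-string does not in general imply one of rank $r-1$. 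For the computer-checked cases this is harmless (an exhaustive search covers all ranks), but for your structural $\mathrm{E}_7$ and $\mathrm{E}_8$ arguments you only set up the analysis at rank equal to the Coxeter rank and would separately need to bound $\mu(W)$ or otherwise exclude all higher ranks.
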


The quest to classify string C-groups started from early experimental results by Hartley \cite{p29} and also the joint work of Leemans and Vauthier \cite{p42} which resulted in atlases of abstract regular polytopes for small groups. More efficient computer algorithms have also been developed which successfully enumerated the C-strings of some sporadic simple groups \cite{p30,p37,p38}. This experimental data has led to some interesting conjectures which were later proven theoretically. In particular, the possible ranks of C-strings has been investigated for some infinite families of almost simple groups, namely the Suzuki groups $\mathrm{Sz}(q)$ \cite{p36}, Ree groups ${}^{2}\mathrm{G}_2(q)$ \cite{p41}, groups with socle $\mathrm{PSL}(2,q)$ \cite{p14,p39,p40},  groups $\mathrm{PSL}(3,q)$ and $\mathrm{PGL}(3,q)$ \cite{p5}, groups $\mathrm{PSL}(4,q)$ \cite{p3}, symmetric groups \cite{p20}, alternating groups \cite{altrank,p22}, orthogonal and symplectic groups \cite{p2,newpaper}. We refer the interested reader to a recent survey article by Leemans \cite{arpsurvey} for more details of these investigations.

Only a few family of groups are known to give rise to string C-groups of arbitrarily large rank.  As already noted, the highest possible rank of a string C-group representation for $\mathrm{Sym}(n)$  is $n-1$, and for $n \ge 12$, the highest rank for $\mathrm{Alt}(n)$ is $\lfloor{\frac{n-1}{2}}\rfloor$ \cite{altrank}. Furthermore, $\mathrm{Sym}(n)$ has C-strings of rank $r$ for every $3 \leq r \leq n-1$ \cite{p20}, and for $n \ge 12$, $\mathrm{Alt}(n)$ also has C-strings of rank $r$ for every $3 \leq r \leq \lfloor{\frac{n-1}{2}}\rfloor$ \cite{p22}. Likewise for all integers $k,m \ge 2$, the orthogonal groups $\mathrm{O}^{\pm}(2m,2^k)$ and the symplectic groups $\mathrm{Sp}(2m,2^k)$ have string C-group representations of rank $2m$ and $2m+1$ \cite{p2}, respectively. Interestingly in \cite{rankred}, Brooksbank proved a `rank reduction theorem' which can be applied on known string C-groups to obtain C-strings of the same group with smaller ranks, and consequently it was shown that $\mathrm{O}^{\pm}(2m,2^k)$ and $\mathrm{Sp}(2m,2^k)$ have string C-group representations of rank $r$ for every $3 \leq r \leq 2m$ and $3 \leq r \leq 2m+1$, respectively. For recent work on polytopes of rank $n/2$ for a transitive group of degree $n$ see \cite{interesting}. Finally, we mention a further study in which two interesting families of C-strings for $\mathrm{B}_n$ are constructed \cite{unravelledbn}.

This paper is structured as follows. In Section \ref{pre}, we  state some preliminary results on Coxeter groups and string C-groups.  Lemma \ref{bndn} plays an important role as our investigations into $W \cong \mathrm{D}_n$ are conducted in $\mathrm{Sym}(2n)$. Section \ref{maxrank} begins with Theorem \ref{Dnneven} showing that $r_{\mathrm{max}}(W)  < n$ when $n$ is even. The remainder of this section focuses on constructing various C-strings for $W$ and then Lemmas \ref{ip} and \ref{ip2} together with Theorem \ref{rankred} are called upon. Our final section gives a census of abstract regular polytopes for the exceptional finite irreducible Coxeter groups.

\section{Preliminaries} \label{pre}

We recall that a \emph{Coxeter group} is a group $W$ with presentation $\langle s_1, \dots, s_n \mid (s_i s_j)^{m_{ij}} \rangle$ where $m_{ii}=1$ and $m_{ij}=m_{ji} \ge 2$ is a positive integer or infinity for every $1 \leq i < j \leq n$ (infinity being read as no relation).  The \emph{Coxeter rank} of $W$ is $n$.  Associated with this presentation is the \emph{Coxeter diagram} where the nodes correspond to the $s_i$ with a bond between $s_i$ and $s_j$ if $i \not= j$ and $m_{ij} > 2$. If this diagram is connected, then $W$ is said to be an \emph{irreducible} Coxeter group. Coxeter \cite{Coxeter} (see also \cite{coxbible}) classified the finite irreducible Coxeter groups. There are four infinite families denoted $\mathrm{A}_n$, $\mathrm{B}_n$, $\mathrm{D}_n$, $\mathrm{I}_2(m)$, and six exceptional groups $\mathrm{H}_3$, $\mathrm{H}_4$, $\mathrm{F}_4$, $\mathrm{E}_6$, $\mathrm{E}_7$, $\mathrm{E}_8$ (we shall blur the distinction between the group and its root system).

As already mentioned in Section \ref{introduction}, we shall focus on the Coxeter groups $\mathrm{D}_n$ and the exceptional groups. Suppose $W$ is isomorphic to $\mathrm{D}_n$, where $n > 4$. Then $W = SN$ where $S \cong \mathrm{Sym}(n)$ and $N$ is a normal subgroup of $W$ with $N$ an elementary abelian subgroup of order $2^{n-1}$. The subgroup $N$ will sometimes be called the subgroup of (even) sign changes. In this paper we frequently view $W$ as a subgroup of $\mathrm{Sym}(2n)$, as specified in our first lemma.

\begin{lemma}{\cite[(2.10)]{coxbible}} \label{bndn}
Let $\beta_0=(1,n+1)(2,n+2)$ and $\beta_i=(i,i+1)(n+i,n+i+1)$ for every $1 \leq i \leq n-1$  be permutations in $\mathrm{Sym}(2n)$. Then $\langle \beta_0,\beta_1,\dots,\beta_{n-1} \rangle$ is isomorphic to $\mathrm{D}_n$.
\end{lemma}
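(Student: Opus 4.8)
The plan is to verify directly that the map sending the Coxeter generators of $\mathrm{D}_n$ to the permutations $\beta_0,\beta_1,\dots,\beta_{n-1}$ extends to an isomorphism, by first checking that the $\beta_i$ satisfy the defining Coxeter relations of type $\mathrm{D}_n$ and then checking that the resulting homomorphism is injective (equivalently, since $\mathrm{D}_n$ is finite, that the image has order $2^{n-1} n!$). Recall that the Coxeter diagram of $\mathrm{D}_n$ has a string $\beta_1 - \beta_2 - \cdots - \beta_{n-1}$ together with the extra node $\beta_0$ attached to $\beta_2$; so the relations to check are: each $\beta_i$ is an involution; $(\beta_i\beta_{i+1})^3 = 1$ for $1 \le i \le n-2$; $(\beta_0\beta_2)^3 = 1$; and all remaining pairs commute, i.e. $(\beta_i\beta_j)^2 = 1$ whenever the corresponding nodes are non-adjacent (in particular $\beta_0$ commutes with $\beta_1$ and with $\beta_i$ for $i \ge 3$).

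First I would observe that each $\beta_i$ is a product of two disjoint transpositions, hence an involution. For the braid relations, note that $\beta_i$ and $\beta_{i+1}$ (for $i \ge 1$) act as $(i,i+1)(i+1,i+2)$ on $\{1,\dots,n\}$ and as the parallel copy on $\{n+1,\dots,2n\}$; on each block this is the standard generating pair of a symmetric group, so the product has order $3$, and since the two blocks are disjoint the order of $\beta_i\beta_{i+1}$ is still $3$. The same reasoning applies to $(\beta_0\beta_2)^3 = 1$: on $\{1,2,3\}$ the generators $\beta_0 = (1,3)$ — wait, more carefully, $\beta_0 = (1,n+1)(2,n+2)$ and $\beta_2 = (2,3)(n+2,n+3)$, which together move the six points $\{1,2,3,n+1,n+2,n+3\}$; a short computation shows $\beta_0\beta_2$ is a product of two disjoint $3$-cycles, hence has order $3$. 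Commutation of the non-adjacent pairs is immediate because the supports are disjoint (for $|i-j|\ge 2$ with $i,j\ge 1$, and likewise $\beta_0$ is disjoint from $\beta_1$ and from every $\beta_i$ with $i \ge 3$). This gives a surjective homomorphism $\varphi : \mathrm{D}_n \to \langle \beta_0,\dots,\beta_{n-1}\rangle$.

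For injectivity I would identify the image explicitly. The subgroup $\langle \beta_1,\dots,\beta_{n-1}\rangle$ is the ``diagonal'' copy of $\mathrm{Sym}(n)$ permuting the pairs $\{i,n+i\}$, acting simultaneously on both blocks; call it $S$. Adjoining $\beta_0$ introduces the element $\beta_0 \beta_0^{\,\beta}$ for suitable conjugates $\beta\in S$, and conjugating $\beta_0 = (1,n+1)(2,n+2)$ by elements of $S$ produces all permutations of the form $(i,n+i)(j,n+j)$ with $i \ne j$; these generate the group $N$ of ``even sign changes,'' an elementary abelian $2$-group of order $2^{n-1}$ that is normalized by $S$ and meets $S$ trivially (elements of $N$ swap $i \leftrightarrow n+i$ for an even-sized set of indices $i$, whereas nontrivial elements of $S$ do not fix the partition into blocks in that way). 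Hence the image is $NS$ of order $2^{n-1} \cdot n! = |\mathrm{D}_n|$, so $\varphi$ is an isomorphism. The only mildly delicate point — the main obstacle — is the bookkeeping showing $N \cap S = 1$ and $|N| = 2^{n-1}$, i.e. that the products of transpositions $(i,n+i)$ one accumulates from conjugates of $\beta_0$ fill out exactly the index-two subgroup of $(\mathbb{Z}/2)^n$ cut out by the parity condition; this is a routine but careful orbit computation in $\mathrm{Sym}(2n)$.
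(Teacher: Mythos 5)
Your plan breaks down at the relation check, and the failure is concrete: with the paper's $\beta_0=(1,n+1)(2,n+2)$, the product $\beta_0\beta_2$ does \emph{not} have order $3$. A direct computation gives that $\beta_0\beta_2$ is the transposition $(1,n+1)$ times a $4$-cycle on $\{2,3,n+2,n+3\}$ (e.g.\ $(1,n+1)(2,n+3,n+2,3)$ in one composition convention), so it has order $4$, not ``two disjoint $3$-cycles''. The underlying reason is that $\beta_0$ is not the simple reflection you implicitly took it to be: under the identification $i\leftrightarrow e_i$, $n+i\leftrightarrow -e_i$, the permutation $\beta_0$ is the sign change $e_1\mapsto -e_1$, $e_2\mapsto -e_2$, which is a length-two element of $W(\mathrm{D}_n)$ (the product of the reflections in $e_1-e_2$ and $e_1+e_2$); the reflection in $e_1+e_2$ would be $(1,n+2)(2,n+1)$. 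Hence the assignment of the Coxeter generators of $\mathrm{D}_n$ to $\beta_0,\beta_1,\dots,\beta_{n-1}$ does not extend to a homomorphism, and the surjection $\varphi$ on which your injectivity-by-order-count step rests does not exist as described. (The paper itself offers no argument here beyond citing Humphreys (2.10), which records precisely the semidirect product description of $W(\mathrm{D}_n)$.)

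The gap is repairable, and your second paragraph already contains the needed ingredients. Either (a) drop the presentation step and argue directly: by Humphreys (2.10), $W(\mathrm{D}_n)$ is the semidirect product of the group of even sign changes by the natural $\mathrm{Sym}(n)$; then show, as you do, that $\langle\beta_1,\dots,\beta_{n-1}\rangle=S$ is the diagonal $\mathrm{Sym}(n)$, that the $S$-conjugates of $\beta_0$ yield all $(i,n+i)(j,n+j)$ and hence generate $N$ elementary abelian of order $2^{n-1}$ with $N\cap S=1$ (nontrivial elements of $N$ send some $i$ to $n+i$, while $S$ preserves $\{1,\dots,n\}$), so that $\langle\beta_0,\dots,\beta_{n-1}\rangle=NS\cong W(\mathrm{D}_n)$; or (b) keep your route but replace $\beta_0$ by $\gamma_0:=\beta_0\beta_1=(1,n+2)(2,n+1)$, which \emph{is} the reflection in $e_1+e_2$: one checks $(\gamma_0\beta_1)^2=1$, $(\gamma_0\beta_2)^3=1$, and $\gamma_0$ commutes with $\beta_i$ for $i\ge 3$, and since $\beta_0=\gamma_0\beta_1$ the two sets generate the same subgroup, so your order count $2^{n-1}n!$ then closes the argument. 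As written, however, the proof contains a genuine error at the braid relation for $\beta_0,\beta_2$.
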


In a similar fashion, we also have the well-known characterization of $\mathrm{Sym}(n)$.

\begin{lemma}{\cite[(6.4)]{coxbible}} \label{sn}
Let $G$ be a group with presentation $\langle s_1, \dots, s_{n-1} \mid (s_i s_j)^{m_{ij}} \rangle$ where $m_{ii} = 1$, $m_{ij}=2$ if $|i-j| \ge 2$ and $m_{ij}=3$ if $|i-j|=1$ for every $1 \leq i,j \leq n-1$. Then $G$ is isomorphic to $\mathrm{Sym}(n)$.
\end{lemma}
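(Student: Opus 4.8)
The plan is to realize $G$ as $\mathrm{Sym}(n)$ by building a surjection onto it and then matching orders. Define $\phi\colon G \to \mathrm{Sym}(n)$ on generators by $\phi(s_i) = (i,\,i+1)$ for $1 \le i \le n-1$. To see that $\phi$ is a well-defined homomorphism I would verify that the adjacent transpositions satisfy the defining relations: each $(i,i+1)$ is an involution, matching $m_{ii}=1$; two transpositions $(i,i+1)$ and $(j,j+1)$ with $|i-j|\ge 2$ are disjoint and hence commute, matching $m_{ij}=2$; and $((i,i+1)(i+1,i+2))^3 = 1$ since the product of two adjacent transpositions is a $3$-cycle, matching $m_{ij}=3$. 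As the adjacent transpositions generate $\mathrm{Sym}(n)$, the map $\phi$ is onto, so $|G| \ge n!$. The whole problem then reduces to the upper bound $|G| \le n!$, since together with surjectivity this forces $\phi$ to be an isomorphism.

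For the upper bound I would induct on $n$, the small cases being trivial. Put $H = \langle s_1,\dots,s_{n-2}\rangle \le G$. Since the generators of $H$ obey exactly the type-$\mathrm{A}_{n-2}$ relations, $H$ is a homomorphic image of the abstractly presented group on $n-2$ generators, so the inductive hypothesis gives $|H| \le (n-1)!$. It then suffices to prove $[G:H] \le n$, whence $|G| \le n\cdot(n-1)! = n!$. To this end set $r_0 = 1$ and $r_j = s_{n-1}s_{n-2}\cdots s_j$ for $1 \le j \le n-1$, and let $T = \bigcup_{j=0}^{n-1} Hr_j$, a union of at most $n$ right cosets of $H$. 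Because $1 \in H \subseteq T$, it is enough to show that $T$ is stable under right multiplication by every generator $s_i$; this forces $Ts_i \subseteq T$ for all $i$, hence $T = G$ and $[G:H]\le n$.

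The heart of the argument, and the step I expect to be most delicate, is verifying this closure, i.e. that each product $r_j s_i$ lands back in $\bigcup_{j} Hr_j$ rather than spawning a new coset. I would split into cases by the position of $i$ relative to $j$. When $i \le j-2$ the generator $s_i$ commutes with every letter of $r_j$, so $r_j s_i = s_i r_j \in Hr_j$; when $i = j-1$ the word simply lengthens to $r_j s_{j-1} = r_{j-1}$, moving to an adjacent representative; and when $i = j$ cancellation via $s_j^2 = 1$ gives $r_j s_j = r_{j+1}$ (with $r_{n-1}s_{n-1} = r_0$ at the boundary). The genuinely substantive case is $i \ge j+1$: here I would commute the trailing $s_i$ leftward past $s_{i-2}\cdots s_j$ to create the pattern $s_i s_{i-1} s_i$, apply the braid relation $s_i s_{i-1} s_i = s_{i-1} s_i s_{i-1}$, and then commute the freed $s_{i-1}$ all the way to the front past the letters $s_{n-1}\cdots s_{i+1}$, obtaining $r_j s_i = s_{i-1} r_j \in Hr_j$. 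The representative $r_0 = 1$ is immediate, since $r_0 s_i = s_i$ either already lies in $H$ or equals $r_{n-1}$. Checking that these cases are exhaustive and that every outcome indeed lies in $T$ is exactly the point at which the braid and commuting relations are indispensable; an incomplete relation set would break the closure and inflate the order. Once closure is established, $T = G$ yields $[G:H]\le n$, completing the order bound and, with the surjectivity of $\phi$, the isomorphism $G \cong \mathrm{Sym}(n)$.
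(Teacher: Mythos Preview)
The paper does not supply its own proof of this lemma; it is quoted directly from Humphreys with no argument given. Your proof is the standard one (and essentially the one found in Humphreys and most treatments of Coxeter groups): realize $\mathrm{Sym}(n)$ as a quotient of $G$ via $s_i\mapsto(i,i{+}1)$, then bound $|G|\le n!$ inductively by exhibiting at most $n$ right cosets of $H=\langle s_1,\dots,s_{n-2}\rangle$. Your case analysis for the closure of $T=\bigcup_j Hr_j$ under right multiplication by each $s_i$ is complete and correct, including the braid-relation step in the $i\ge j+1$ case.
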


Observe that in Lemma \ref{bndn}, $W \cong \mathrm{D}_n$ is a transitive subgroup of $\mathrm{Sym}(2n)$. Also note, using the notation from Lemma \ref{bndn} that  $\langle \beta_1,\dots,\beta_{n-1} \rangle \cong \mathrm{Sym}(n)$, while products of an even number of transpositions of the form $(i, n+i)$, for some $1 \leq i \leq n$ yields the normal elementary abelian subgroup of sign changes. The next lemma will be used frequently in Section \ref{maxrank}.

\begin{lemma}\label{intN} Suppose that $W \cong \mathrm{D}_n$ with $N$ being the subgroup of sign changes. If $H \leq W$ is such that $W = HN$ and $H \cap N \not\leq Z(W)$, then $W = H$.
\end{lemma}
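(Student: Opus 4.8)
The plan is to exploit the structure $W = SN$ with $S \cong \mathrm{Sym}(n)$ and $N$ elementary abelian of order $2^{n-1}$, together with the fact that $W$ acts (via Lemma~\ref{bndn}) transitively on $2n$ points, and to pin down what the normal subgroups of $W$ look like. The key point is that $N$ is a minimal normal subgroup of $W$ (or, more precisely, that $N$ has exactly one proper nontrivial $S$-submodule), so that a normal subgroup of $W$ meeting $N$ nontrivially and not centrally must contain all of $N$; once $H \trianglelefteq W$ contains $N$ and satisfies $W = HN$, we get $H = W$ immediately. The only thing that needs genuine justification is therefore: if $1 \ne H \cap N \not\le Z(W)$, then $N \le H$.

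First I would recall the $S$-module structure of $N$. Writing $N$ as the set of even-weight vectors in $\mathbb{F}_2^n$ with $S \cong \mathrm{Sym}(n)$ permuting coordinates, there is a unique $1$-dimensional submodule when $n$ is even, namely the all-ones vector $\mathbf{1} = (1,\dots,1)$, which corresponds to the central product of all $n$ sign changes; for $n$ odd, $\mathbf{1}$ has odd weight and lies outside $N$, so $N$ is an irreducible $S$-module. In both cases $Z(W) \cap N$ is exactly $\langle \mathbf{1}\rangle$ when $n$ is even (and this is $Z(W)$ itself, $W$ having trivial centre otherwise) — in fact for the argument one only needs that the unique proper nonzero $S$-submodule of $N$, if it exists, is contained in $Z(W)$. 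Then if $H \le W$ with $W = HN$, conjugation shows $H \cap N$ is normalized by $H$ and by $N$ (the latter since $N$ is abelian), hence by $HN = W$, so $H \cap N$ is an $S$-submodule of $N$. Being nonzero and, by hypothesis, not contained in $Z(W)$, it cannot be the exceptional $1$-dimensional submodule nor trivial, so by the previous paragraph $H \cap N = N$, i.e. $N \le H$. Finally $W = HN = H$.

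The main obstacle — such as it is — is the module-theoretic input: verifying that the only $\mathrm{Sym}(n)$-submodules of the even-weight code in $\mathbb{F}_2^n$ are $0$, $\langle\mathbf{1}\rangle$ (when $n$ is even), and the whole space. This is a standard fact about the structure of the $\mathbb{F}_2$-permutation module of $\mathrm{Sym}(n)$, and I would simply cite it or dispatch it with a one-line argument: any nonzero submodule contains some vector $v$ of minimal weight $w \ge 2$; transitivity of $\mathrm{Sym}(n)$ on pairs lets one sum two such vectors overlapping in exactly $w-1$ coordinates to produce a weight-$2$ vector unless $w = n$ (forcing $v = \mathbf{1}$), and a single weight-$2$ vector generates all of $N$ under $S$. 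Everything else in the proof is formal manipulation with the relations $W = HN$, $[N,N]=1$, and the definition of $Z(W)$, so no lengthy computation is required.
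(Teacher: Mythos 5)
Your proof is correct and takes essentially the same route as the paper: since $N$ is abelian and $W=HN$, the intersection $H\cap N$ is normal in $W$, and the only nontrivial normal subgroups of $W$ contained in $N$ are $Z(W)$ (when $n$ is even) and $N$ itself, so $H\cap N=N$ and $H=HN=W$. The only difference is that you additionally sketch a verification of the submodule classification for the even-weight permutation module, a fact the paper simply quotes without proof.
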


\begin{proof} Since $N$ is abelian and $W = HN$,  $H \cap N \unlhd W$.  The only non-trivial normal subgroup of $W$ contained in $N$ is $Z(W)$ (if $n$ is even) and $N$. Thus, as $H \cap N \not\leq Z(W)$, $H \cap N = N$ which implies $H = HN = W$.
\end{proof}

Now let $G$ be a group generated by involutions $s_1, \dots, s_r$ with the ordering such that $s_i s_j = s_j s_i$ (or equivalently, $(s_i s_j)^2 = 1$) if $|i-j| \ge 2$ for every $i,j \in I = \{1, \dots, r \}$. Then $(G,\{ s_1, \dots, s_r \})$ is said to be a \emph{string group generated by involutions}.  Set $G_J = \langle s_j \; | \; j \in J \rangle$ for every $J \subseteq I$, and sometimes we may write $G_{i_1,\dots,i_k}$ in place of $G_{\{i_1,\dots,i_k\}}$.  Its \emph{Schl\"{a}fli type} is the sequence $\{p_1,\dots,p_{r-1}\}$ where $p_i$ is the order of $s_is_{i+1}$ for every $1 \leq i \leq r-1$. If a symbol $p_i$ appears $k$ times in adjacent places in the sequence we may write $p_i^k$ in place of writing $p_i$ $k$ times. So a string group generated by involutions will be a string C-group if the intersection condition also holds.

A  set $R$ of elements of a group $G$ is \emph{independent} if for every $g \in R$, $g \not\in \langle R \setminus \{ g \} \rangle$. Let $\mu(G)$ denote the maximum size of an independent subset of $G$.

\begin{theorem}\label{independ} Suppose that $G \cong \mathrm{Sym}(m)$. Then
\begin{enumerate}
\item [(i)] $\mu(G) \leq m-1$.
\item [(ii)] Assume that $m \geq 5$.  If $(G,S)$ is a string group generated by involutions and $S$ is an independent set of size $m-1$, then $S$ is the set of Coxeter generators for $G$.
\end{enumerate}

\begin{proof}(i) This is a theorem of Whiston \cite{whiston}.
(ii) This follows from Cameron and Cara \cite{p7} which classifies independent sets of size $m - 1$ in $\mathrm{Sym}(m)$ when $m \geq 7$, while $m = 5, 6$ can be checked using \textsc{Magma} \cite{magma}.
\end{proof}

\end{theorem}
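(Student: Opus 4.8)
The plan for (i) is to reduce to and then invoke Whiston's bound. The elementary starting point is that if $S$ is independent and $H=\langle S\rangle$, then adjoining each element of $S$ to the subgroup generated by the others at least doubles the order, so $|S|\le\log_2|H|$; this suffices for small subgroups but not for $H=\mathrm{Sym}(m)$ itself, so one induces on $m$. Given an independent set $S$ with $H=\langle S\rangle$: if $H$ is intransitive with orbit sizes $m_1,\dots,m_t$, then $H$ embeds in $\mathrm{Sym}(m_1)\times\cdots\times\mathrm{Sym}(m_t)$ and an induction on the factors gives $|S|\le\sum_i(m_i-1)\le m-t\le m-1$; if $H$ is transitive but imprimitive, one passes to a block system and balances the bound on the group induced on the blocks against the bound on the action within a block; and if $H$ is primitive, then either $H\in\{\mathrm{Alt}(m),\mathrm{Sym}(m)\}$, handled by a direct argument, or, by the classification of finite simple groups, $|H|$ is small enough (essentially $|H|<4^m$ for $m$ large) that the doubling bound already yields $|S|<2m$, after which a sharper version of the same analysis brings the bound down to $m-1$. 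Making the primitive case genuinely give $m-1$ and not merely $O(m)$ is the technical heart of Whiston's paper, and I would cite it rather than reproduce it.

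For (ii) the strategy is to feed the extremal case into the Cameron--Cara classification and then exploit the string condition. For $m\ge 7$ their result describes, up to conjugacy in $\mathrm{Sym}(m)$ and relabelling of the underlying $m$-set, all independent generating sets of size $m-1$: the principal family consists of the sets of transpositions indexed by the edges of a spanning tree of $K_m$, together with a short, explicit list of further configurations of bounded degree. One then imposes the hypothesis that, after a suitable ordering $S=\{s_1,\dots,s_{m-1}\}$, one has $s_is_j=s_js_i$ whenever $|i-j|\ge 2$. For a transposition--tree, $s_i$ and $s_j$ commute precisely when their edges are vertex-disjoint, so a vertex of degree $\ge 3$ would produce three pairwise non-disjoint edges; but among any three indices two differ by at least $2$, forcing two of those transpositions to commute, a contradiction. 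Hence the tree is a path, its non-commuting graph is a path on $m-1$ vertices, and the only string orderings compatible with it are the path order and its reverse; relabelling the points then identifies $S$ with the Coxeter generators $\{(1,2),(2,3),\dots,(m-1,m)\}$. It remains to check that none of the finitely many exceptional configurations from Cameron--Cara satisfies the string property, and to settle $m=5,6$ by direct computation in \textsc{Magma}.

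The main obstacle is the weight of the two external ingredients: part (i) rests on the classification of finite simple groups through Whiston's theorem, and part (ii) on the Cameron--Cara classification, neither of which is light. Granting these, the only genuinely new step is the combinatorial reduction in (ii), and the point there that needs care is that a priori each $s_i$ is merely an involution, that is, a product of disjoint transpositions, not necessarily a single transposition; it is precisely the Cameron--Cara classification that rules out the non-transposition possibilities in large degree, so that input is essential and cannot be bypassed by an elementary argument.
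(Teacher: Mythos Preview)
Your proposal is correct and takes essentially the same route as the paper: both parts are handled by citation---Whiston for (i), Cameron--Cara together with a \textsc{Magma} check for $m\in\{5,6\}$ for (ii)---and your additional sketch of how the string condition forces a transposition tree to be a path is a welcome elaboration the paper omits. One small correction: the non-tree family in Cameron--Cara is not a finite list of bounded-degree exceptions but a second infinite family (obtained from a spanning tree by multiplying some of its transpositions by a fixed one), so your ``remains to check'' step is another combinatorial reduction rather than a finite verification; since the paper also leaves this inside the citation, your level of detail already exceeds theirs.
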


Theorem \ref{independ} is deployed in the proof of Theorem \ref{Dnneven}, as is our next theorem which gives an upper bound for the rank of a C-string of transitive permutation groups. This is due to Cameron, Fernandes, Leemans and Mixer and is as follows.

\begin{theorem}{\cite[Theorem 1.2]{transrank}} \label{transrank}
Let $G$ be a string C-group of rank $r$ which is isomorphic to a transitive subgroup of $\mathrm{Sym}(n)$ other than $\mathrm{Sym}(n)$ or $\mathrm{Alt}(n)$. Then one of the following holds.
\begin{enumerate}
\item [(i)] $r \leq n/2$.
\item [(ii)]$n \equiv 2 \pmod 4$ and $G \cong \mathrm{B}_n \cong C_2 \wr \mathrm{Sym}(n/2)$ with Schl\"{a}fli type $\{2,3^{n/2-2},4\}$.
\item [(iii)] $G$ is imprimitive and is one of the following.
\begin{enumerate}
\item [(a)]$G=\mathrm{6T9}$ with Schl\"{a}fli type $\{3,2,3\}$.
\item [(b)]$G=\mathrm{6T11}$ with Schl\"{a}fli type $\{2,3,3\}$.
\item [(c)]$G=\mathrm{6T11}$ with Schl\"{a}fli type $\{2,3,4\}$.
\item [(d)]$G=\mathrm{8T45}$ with Schl\"{a}fli type $\{3,4,4,3\}$.
\end{enumerate}
Here, $\mathrm{nTj}$ is the $j$-th conjugacy class among transitive subgroups of $\mathrm{Sym}(n)$ according to Butler and McKay \cite{butlermckay}.
\item [(iv)]$G$ is primitive. In this case, $n=6$ and $G$ is obtained from the degree six permutation representation of $\mathrm{Sym}(5) \cong \mathrm{PGL}_2(5)$ and is the 4-simplex of Schl\"{a}fli type $\{3,3,3\}$.
\end{enumerate}
\end{theorem}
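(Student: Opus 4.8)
The plan is to analyse $G$ through its action on the $n$ points, encoded combinatorially, and to split the argument according to whether this action is primitive or imprimitive. Throughout I write $S=\{\rho_0,\dots,\rho_{r-1}\}$ in the string order, so that $\rho_i\rho_j=\rho_j\rho_i$ whenever $|i-j|\ge 2$, and I record the action by the \emph{permutation representation graph}: the graph on the $n$ points in which, for each generator $\rho_i$, one colours with colour $i$ every $2$-cycle of $\rho_i$. Each colour class is then a partial matching, the graph is connected precisely because $G$ is transitive, the string property says that matchings of colours differing by at least $2$ are compatible, and the intersection property gives $\langle\rho_0,\dots,\rho_i\rangle\cap\langle\rho_{i+1},\dots,\rho_{r-1}\rangle=1$ for every split. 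The rank $r$ is the number of colours, and the whole task is to bound this by $n/2$ apart from the listed exceptions.

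First I would establish the generic bound $r\le n/2$ by a counting argument on this graph. For $0\le i\le r-1$ let $c_i$ be the number of orbits of $\langle \rho_0,\dots,\rho_i\rangle$, equivalently the number of connected components of the subgraph using colours $0,\dots,i$; then $c_{r-1}=1$ while the edgeless graph has $n$ components, so $\sum_i(c_{i-1}-c_i)=n-1$. Since a colour merging at least two components contributes at least $1$ to this sum, one always gets the weak bound $r\le n-1$, attained by $\mathrm{Sym}(n)$ with its Coxeter generators. The role of excluding $\mathrm{Sym}(n)$ and $\mathrm{Alt}(n)$ is that the extremal ``path-like'' configurations of the graph force $G$ to contain $\mathrm{Alt}(n)$ by a Jordan-type argument on small-support elements; once these are ruled out, the intersection property together with the commuting relations among non-adjacent generators forces the merges to be, on average, at least double, yielding $\sum_i(c_{i-1}-c_i)\ge 2r-1$ and hence $r\le n/2$.

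Next I would treat the imprimitive case. If $G$ preserves a non-trivial block system with $b$ blocks of size $k$, then $G$ embeds in $\mathrm{Sym}(k)\wr\mathrm{Sym}(b)$, the induced action on blocks is again a (possibly degenerate) string group, and the kernel acts within the blocks. I would run an induction on $n$ through this decomposition: the bounds for the block action and for the block kernel combine to give $r\le n/2$ unless the structure is as tight as possible, which happens exactly when $G$ is the full wreath product $C_2\wr\mathrm{Sym}(n/2)$ acting with two points per block. Analysing when this group can carry a string C-group of rank $n/2+1$ pins down the Schl\"{a}fli type $\{2,3^{\,n/2-2},4\}$ and the congruence $n\equiv 2\pmod 4$, giving case (ii); the finitely many low-degree configurations that survive the induction with $r>n/2$ are case (iii), which I would confirm directly in \textsc{Magma}.

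Finally the primitive case is the main obstacle. Here I would invoke the O'Nan--Scott theorem together with the classification of finite simple groups: a primitive string C-group generated by $r$ involutions with the string and intersection properties is severely constrained, since the maximal parabolic subgroups $\langle \rho_j : j\ne i\rangle$ must generate the intersection-closed lattice demanded by the intersection property, which is incompatible with the large primitive groups. Eliminating the primitive affine, almost simple, diagonal and product types in turn leaves only very small degrees, checked by hand or machine; the sole survivor violating $r\le n/2$ is the degree six action of $\mathrm{Sym}(5)\cong\mathrm{PGL}_2(5)$, giving the $4$-simplex of type $\{3,3,3\}$, that is, case (iv). I expect the delicate points to be the extremal analysis underpinning the $n/2$ bound and the primitive elimination, both of which rest on deep external input (Whiston's theorem and the classification of finite simple groups) rather than on any single self-contained computation.
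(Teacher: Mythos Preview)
The paper does not prove this theorem at all: it is quoted verbatim as Theorem~1.2 of Cameron--Fernandes--Leemans--Mixer \cite{transrank} and used as a black box (to bound $r_{\max}(\mathrm{D}_n)$ via the embedding $\mathrm{D}_n\le\mathrm{Sym}(2n)$). There is therefore nothing in the present paper to compare your proposal against; what you have written is a plausible high-level sketch of how the \emph{cited} paper might proceed, but that is a different document. If the task is to reproduce the paper's treatment of this statement, the correct answer is simply to cite \cite{transrank}.
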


We will also make use of the following lemmas to verify that the generating set of involutions we construct indeed give us string C-groups.

\begin{lemma}{\cite[2E16(a) and 11A10]{arpbible}} \label{ip}
Let $(G,\{ s_1,\dots,s_r \})$ be a string group generated by involutions where $G_{1,\dots,r-1}$ and $G_{2,\dots,r}$ are string C-groups. Then $G$ is a string C-group if either of the following conditions are satisfied.
\begin{enumerate}
\item[(i)] $G_{1,\dots,r-1} \cap G_{2,\dots,r} = G_{2,\dots,r-1}$.
\item[(ii)] $s_r \notin G_{1,\dots,r-1}$ and $G_{2,\dots,r-1}$ is maximal in $G_{2,\dots,r}$. 
\end{enumerate}
\end{lemma}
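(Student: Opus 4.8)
The plan is to obtain (ii) as a quick consequence of (i), and to prove (i) directly -- without invoking any deep classification -- by reducing the intersection property for arbitrary $J,K$ to the single hypothesised identity, exploiting the commuting structure forced by the string property. For (ii), assuming (i), put $H=G_{1,\dots,r-1}\cap G_{2,\dots,r}$. Every $s_j$ with $2\le j\le r-1$ lies in both factors, so $G_{2,\dots,r-1}\le H\le G_{2,\dots,r}$, and by maximality $H$ equals one of these. If $H=G_{2,\dots,r}$ then $s_r\in H\le G_{1,\dots,r-1}$, against hypothesis; hence $H=G_{2,\dots,r-1}$, condition (i) holds, and $G$ is a string C-group.

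For (i) I would begin with two observations. First, the hypothesis already forces $s_r\notin G_{1,\dots,r-1}$ and, symmetrically, $s_1\notin G_{2,\dots,r}$: if $s_r\in G_{1,\dots,r-1}$ then $s_r\in G_{1,\dots,r-1}\cap G_{2,\dots,r}=G_{2,\dots,r-1}$, whereas the intersection property of the string C-group $G_{2,\dots,r}$ gives $\langle s_r\rangle\cap G_{2,\dots,r-1}=\{1\}$, forcing $s_r=1$, absurd. Second, every proper parabolic $G_L$ ($L\subsetneq I$) is itself a string C-group: if $L\subseteq\{1,\dots,r-1\}$ or $L\subseteq\{2,\dots,r\}$ this is because a parabolic of a string C-group is a string C-group; otherwise $1,r\in L$ but some $k$ with $1<k<r$ is missing from $L$, and then $G_L=G_{L^-}\times G_{L^+}$ with $L^-=L\cap\{1,\dots,k-1\}$, $L^+=L\cap\{k+1,\dots,r\}$, the two factors commuting by the string property, each a parabolic of $G_{1,\dots,r-1}$ or of $G_{2,\dots,r}$, and with $G_{L^-}\cap G_{L^+}=\{1\}$ by a short chase of the inclusion $G_{L^-}\cap G_{L^+}\le G_{1,\dots,r-1}\cap G_{2,\dots,r}=G_{2,\dots,r-1}$ through the intersection properties of the two big parabolics; a direct product of string C-groups with commuting factors is again a string C-group.

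It then remains to verify $G_J\cap G_K=G_{J\cap K}$ for all $J,K\subseteq I$; the inclusion $\supseteq$ and the cases $J=I$ or $K=I$ are trivial, so take $J,K\subsetneq I$. If $J,K\subseteq\{1,\dots,r-1\}$ or $J,K\subseteq\{2,\dots,r\}$ the identity holds because the relevant big parabolic is a string C-group; and if $J$ and $K$ share a missing node $k\in\{2,\dots,r-1\}$ it holds because $G_{I\setminus\{k\}}$ is a string C-group. After swapping $J,K$ we may assume $r\in J$. If $r\notin K$ -- or, symmetrically, if $1$ lies in exactly one of $J$ and $K$ -- then one of $G_J,G_K$ sits inside a big parabolic, and a ``peeling'' argument (splitting off the extreme generator via the commuting decomposition and using the two observations above) pushes any $x\in G_J\cap G_K$ into $G_{J'}$ with $J'$ a proper subset of $J$ missing $r$ (resp.\ missing $1$), after which the string C-group property of that big parabolic yields $x\in G_{J\cap K}$. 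The genuinely delicate case is $1,r\in J\cap K$ with $J\cup K=I$: here I would take the least node $k$ missing from $J$ and the least node $\ell$ missing from $K$ (necessarily $\ell\ne k$; WLOG $k<\ell$, the other case being symmetric), split $G_J$ and $G_K$ at these nodes, and repeat the intersection bookkeeping to land $x$ in $G_{J\setminus\{\ell\}}$; then $J\setminus\{\ell\}$ and $K$ share the missing interior node $\ell$, and $G_{I\setminus\{\ell\}}$ being a string C-group gives $x\in G_{(J\setminus\{\ell\})\cap K}=G_{J\cap K}$. Marshalling this last case and checking the residual memberships is the main obstacle; a streamlined account is \cite[2E16(a)]{arpbible}, which I would follow for the routine computations.
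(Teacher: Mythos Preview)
The paper does not give its own proof of this lemma: it is stated with a citation to \cite{arpbible} (2E16(a) and 11A10) and no argument is supplied. So there is nothing in the paper to compare against beyond that citation.

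Your proposal is essentially a correct outline of the standard proof from that reference, and you say as much in your final sentence. The reduction of (ii) to (i) via maximality is clean and correct. For (i), your two preliminary observations (that $s_1,s_r$ cannot fall into the opposite big parabolic, and that every proper $G_L$ is already a string C-group, handled via the direct-product splitting when $1,r\in L$) are the right scaffolding, and your verification that $G_{L^-}\cap G_{L^+}=\{1\}$ by chasing through $G_{2,\dots,r-1}$ is sound. The case analysis for $G_J\cap G_K$ is also correctly partitioned; in particular your observation that in the residual case $1,r\in J\cap K$ one must have $J\cup K=I$ (else a common missing interior node exists) is right, and the strategy of removing $\ell$ from $J$ to manufacture a shared missing node is the standard trick. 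The one place where you defer to the reference---the bookkeeping in that last case---is genuinely the fiddly part, but it goes through exactly as you indicate: split both $G_J$ and $G_K$ at their missing nodes, use the commuting decomposition to isolate the factor containing the extreme generator, and push it into $G_{2,\dots,r-1}$ via the hypothesis (i). Since you explicitly point to \cite[2E16(a)]{arpbible} for these computations, and that is precisely the paper's own citation, your proposal and the paper's treatment coincide.
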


\begin{lemma}{\cite[2E16(b)]{arpbible}} \label{ip2}
Let $(G,\{ s_1,\dots,s_r \})$ be a string group generated by involutions such that $G_{1,\dots,r-1}$ is a string C-group and $G_{1,\dots,r-1} \cap G_{k,\dots,r} = G_{k,\dots,r-1}$ for every $2 \leq k \leq r-1$. Then $G$ is also a string C-group.
\end{lemma}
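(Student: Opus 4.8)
The plan is to deduce the result from Lemma~\ref{ip}(i) by induction on the rank $r$. The $k = 2$ instance of the hypothesis is exactly the identity $G_{1,\dots,r-1} \cap G_{2,\dots,r} = G_{2,\dots,r-1}$, and $G_{1,\dots,r-1}$ is a string C-group by assumption; so, by Lemma~\ref{ip}(i), it suffices to show that $G_{2,\dots,r}$ is itself a string C-group. Since $s_i s_j = s_j s_i$ whenever $i,j \in \{2,\dots,r\}$ and $|i-j| \ge 2$, the pair $(G_{2,\dots,r},\{s_2,\dots,s_r\})$ is again a string group generated by involutions, and I would show it is a string C-group by applying the induction hypothesis to it, after relabelling $s_2,\dots,s_r$ as $s_1',\dots,s_{r-1}'$.

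For the inductive step I must check that this smaller group meets the hypotheses of the lemma. First, $G_{2,\dots,r-1}$ is a string C-group: it is the subgroup of the string C-group $G_{1,\dots,r-1}$ generated by a subset of the distinguished involutions, and such a subgroup $G_J$ inherits both the string property and the intersection property (for index sets drawn from $J$) from $G_{1,\dots,r-1}$, hence is itself a string C-group. Secondly, I need $G_{2,\dots,r-1} \cap G_{k,\dots,r} = G_{k,\dots,r-1}$ for every $3 \le k \le r-1$; this follows from
\[
G_{2,\dots,r-1} \cap G_{k,\dots,r} = G_{2,\dots,r-1} \cap \bigl(G_{1,\dots,r-1}\cap G_{k,\dots,r}\bigr) = G_{2,\dots,r-1}\cap G_{k,\dots,r-1} = G_{k,\dots,r-1},
\]
where the middle step uses the $k$-th hypothesis and the last uses the intersection property within $G_{1,\dots,r-1}$ together with the fact that $\{2,\dots,r-1\}\cap\{k,\dots,r-1\} = \{k,\dots,r-1\}$ since $k \ge 3$. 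The induction hypothesis then gives that $G_{2,\dots,r}$ is a string C-group, and a single application of Lemma~\ref{ip}(i) completes the step.

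The base of the induction is the case $r \le 2$, for which the list of conditions indexed by $2 \le k \le r-1$ is empty: any string group generated by involutions of rank at most $2$ is automatically a string C-group, since in the rank-$2$ case the sole nontrivial instance of the intersection property is $\langle s_1\rangle \cap \langle s_2\rangle = \{1\}$, and a common nontrivial element of these cyclic $2$-groups would force $s_1 = s_2$.

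I do not expect a genuine obstacle here; the argument is essentially bookkeeping. The one point needing attention is that the index range $2 \le k \le r-1$ for the full group translates, after deleting $s_1$, into precisely the range $3 \le k \le r-1$ required of $G_{2,\dots,r}$, so the induction closes cleanly; and the only external input, that a \emph{parabolic} subgroup $G_J$ of a string C-group is again a string C-group, is immediate from the definitions. Were Lemma~\ref{ip} not available, one would instead have to verify the intersection property $G_J \cap G_K = G_{J\cap K}$ for all $J,K \subseteq \{1,\dots,r\}$ directly, arguing by cases on whether $r \in J$ and whether $r \in K$, and that is where the real work would sit.
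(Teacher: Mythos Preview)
The paper does not supply its own proof of this lemma; it is quoted directly from McMullen and Schulte \cite{arpbible} (Proposition~2E16(b)) without argument. Your inductive reduction to Lemma~\ref{ip}(i) is correct and is essentially the standard proof given in that reference: one shows $G_{2,\dots,r}$ is a string C-group by induction, using that $G_{2,\dots,r-1}$ inherits the C-property from $G_{1,\dots,r-1}$ and that the required intersections $G_{2,\dots,r-1}\cap G_{k,\dots,r}=G_{k,\dots,r-1}$ follow from the ambient hypotheses together with the intersection property already known inside $G_{1,\dots,r-1}$.
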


%\begin{lemma}{\cite[Proposition 3.3]{p23}} \label{mix}
%Let $(G,\{ s_1,\dots,s_r \}$ be a string C-group and $\tau \notin \Gamma$ be an involution commuting with all of $G$ such that $\widehat{G} := \langle \tau s_1,  s_2, \dots,  s_r \rangle$ is a string group generated by involutions. Then $\widehat{G}$ is also a string C-group.
%\end{lemma}

Once we have constructed a C-string of maximal rank, we will use the following rank reduction theorem by Brooksbank and Leemans to obtain C-strings of smaller ranks. 

\begin{theorem}{\cite[Theorem 1.1 and Corollary 1.3]{rankred}} \label{rankred}
Let $(G,\{ s_1,\dots,s_n \})$ be a non-degenerate string C-group of rank $n \ge 4$ with Schl\"{a}fli type $\{p_1,\dots,p_{n-1}\}$. If $s_1 \in \langle s_1 s_3 , s_4 \rangle$, then $(G,\{ s_2, s_1 s_3, s_4, \dots,  s_n \})$ is a string C-group of rank $n-1$. In particular, $G$ has a C-string of rank $n-i$ for every $0 \leq i \leq t$, where
\begin{equation*}
t = \max \{ j \in \{0,\dots,n-3\} \mid \text{for all} \;  i \in \{0,\dots,j\}, \ p_{3+i} \text{ is odd} \}.
\end{equation*}
\end{theorem}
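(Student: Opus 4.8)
The plan is to prove the first assertion—that $(G,\{s_2,s_1s_3,s_4,\dots,s_n\})$ is a string C-group of rank $n-1$—and then to read off the ``in particular'' clause by iterating it. Write $t_1=s_2$, $t_2=s_1s_3$, and $t_i=s_{i+1}$ for $3\le i\le n-1$. First I would dispatch the easy parts: since $s_1$ and $s_3$ commute (the string property of the original C-group), $t_2$ is an involution; and since $s_1\in\langle s_1s_3,s_4\rangle\le\langle t_2,t_3\rangle$ we recover $s_3=s_1\cdot(s_1s_3)$, so $\langle t_1,\dots,t_{n-1}\rangle=G$. The string property for the new ordering is immediate from the old one: $s_2$ commutes with every $s_j$ for $j\ge 4$, $s_1s_3$ commutes with every $s_j$ for $j\ge 5$, and $s_is_j=s_js_i$ whenever $|i-j|\ge 2$.

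For the intersection property I would induct on $n$ and invoke Lemma~\ref{ip2}. Assume $n\ge 5$. Because $s_1\in\langle s_1s_3,s_4\rangle$ and $s_4$ appears among $t_3,\dots,t_{n-2}$, one has $G_{t_1,\dots,t_{n-2}}=\langle s_2,s_1s_3,s_4,\dots,s_{n-1}\rangle=G_{1,\dots,n-1}$; this face $(G_{1,\dots,n-1},\{s_1,\dots,s_{n-1}\})$ is a non-degenerate string C-group of rank $n-1\ge 4$ satisfying the same hypothesis $s_1\in\langle s_1s_3,s_4\rangle$, and $\{t_1,\dots,t_{n-2}\}$ is exactly its rank reduction, so by the inductive hypothesis $(G_{1,\dots,n-1},\{t_1,\dots,t_{n-2}\})$ is a string C-group. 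Next, using the intersection property of $G$ one identifies $G_{t_k,\dots,t_{n-1}}=G_{k+1,\dots,n}$ for $3\le k\le n-2$, and $G_{t_2,\dots,t_{n-1}}=\langle s_1s_3,s_4,\dots,s_n\rangle=G_{1,3,4,\dots,n}$ (again recovering $s_1$), and similarly $G_{t_2,\dots,t_{n-2}}=G_{1,3,4,\dots,n-1}$. The hypotheses of Lemma~\ref{ip2} then become $G_{1,\dots,n-1}\cap G_{k+1,\dots,n}=G_{k+1,\dots,n-1}$ and $G_{1,\dots,n-1}\cap G_{1,3,\dots,n}=G_{1,3,\dots,n-1}$, both of which are instances of the intersection property of the original C-group $G$; hence $(G,\{t_1,\dots,t_{n-1}\})$ is a string C-group.

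The base case $n=4$ is where the real work lies. Here Lemma~\ref{ip2} (with $r=3$) reduces the claim to the single identity $\langle s_2,s_1s_3\rangle\cap\langle s_1s_3,s_4\rangle=\langle s_1s_3\rangle$. Since $\langle s_1s_3,s_4\rangle=G_{1,3,4}$ and $\langle s_2,s_1s_3\rangle\le G_{1,2,3}$, the left side lies inside $G_{1,2,3}\cap G_{1,3,4}=G_{1,3}=\{1,s_1,s_3,s_1s_3\}$, so it suffices to show $s_1\notin\langle s_2,s_1s_3\rangle$. Otherwise $\langle s_2,s_1s_3\rangle$ would also contain $s_3$ and hence equal $G_{1,2,3}$, making $G_{1,2,3}$ a group generated by two involutions, i.e.\ dihedral; but $(G_{1,2,3},\{s_1,s_2,s_3\})$ is a \emph{non-degenerate} string C-group of rank $3$, and no dihedral group admits a non-degenerate string C-group representation of rank $\ge 3$ (its non-degenerate polytopes being only the polygons). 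I expect the crux to be exactly this control of the ``twisted'' subgroups $\langle\dots,s_1s_3,\dots\rangle$ in which $s_1$ cannot be recovered from the listed generators: for $n\ge 5$ Lemma~\ref{ip2} is arranged so that every subgroup in play is of the recoverable kind, and the base case $n=4$ is where the genuine obstruction sits and where non-degeneracy is indispensable.

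Finally, for the ``in particular'' clause: if $p_3$ is odd then $s_1s_3s_4=s_1(s_3s_4)$ with $s_1$ centralising $\langle s_3,s_4\rangle$ and $\langle s_1\rangle\cap\langle s_3,s_4\rangle=1$ (intersection property), so $|s_1s_3s_4|=2p_3$ and $(s_1s_3s_4)^{p_3}=s_1$, whence the hypothesis $s_1\in\langle s_1s_3,s_4\rangle$ holds automatically. The Schl\"afli type of the reduced string $(G,\{t_1,\dots,t_{n-1}\})$ is $\{\,|s_2s_1s_3|,\ \mathrm{lcm}(2,p_3),\ p_4,\ p_5,\ \dots,\ p_{n-1}\,\}$, so the entry of the new type playing the role of ``$p_3$'' for a further reduction is $p_4$. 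Iterating, each successive reduction is available exactly when the next entry $p_3,p_4,\dots$ is odd, and keeping track of these shifted types yields precisely the claimed range $\{\,n-i : 0\le i\le t\,\}$ of attainable ranks.
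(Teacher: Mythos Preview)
This theorem is not proved in the present paper: it is quoted from Brooksbank and Leemans \cite{rankred} and used as a black box (in Theorems~\ref{nodd} and~\ref{neven}). There is therefore no ``paper's own proof'' to compare your attempt against.

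That said, your argument is correct. The inductive step for $n\ge 5$ is clean: once you identify $G_{t_1,\dots,t_{n-2}}=G_{1,\dots,n-1}$, $G_{t_2,\dots,t_{n-1}}=G_{\{1,3,4,\dots,n\}}$ and $G_{t_k,\dots,t_{n-1}}=G_{k+1,\dots,n}$ for $k\ge 3$, Lemma~\ref{ip2} reduces everything to instances of the intersection property of the original string. The base case $n=4$ is correctly isolated, and your reduction to ``$G_{1,2,3}$ is not dihedral'' is valid; a self-contained way to see this (avoiding the appeal to polytope folklore) is that if $s_2$ commuted with $s_1s_3$ then $(s_1s_2)^2=(s_3s_2)^2\in G_{1,2}\cap G_{2,3}=\langle s_2\rangle$, forcing either $p_1\le 2$ or $s_2=1$, both contrary to non-degeneracy. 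This same computation shows the reduced string is again non-degenerate, a point you need---but do not explicitly verify---to iterate the construction in the ``in particular'' clause. With that small addition the corollary follows exactly as you describe, the $k$-th reduction being governed by the parity of $p_{k+2}$.
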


\section{C-Strings for $\mathrm{D}_n$} \label{maxrank} 

\begin{theorem}\label{Dnneven}
Let $n$ be even with $n \geq 6$. If $W \cong \mathrm{D}_n$, then $r_{\mathrm{max}}(W) < n$.
\end{theorem}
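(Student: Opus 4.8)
The plan is to argue by contradiction: suppose $(W, S)$ is a string C-group with $W \cong \mathrm{D}_n$ and $|S| = n$, where $n$ is even, $n \geq 6$. We work inside $\mathrm{Sym}(2n)$ via Lemma \ref{bndn}, so $W$ is a transitive subgroup of $\mathrm{Sym}(2n)$ which is neither $\mathrm{Sym}(2n)$ nor $\mathrm{Alt}(2n)$. Since $S$ has $n$ involutions and a string C-group is always a minimal (hence independent) generating set, $\mu(W) \geq n$, and the rank $r = n$ equals $2n/2$, so we are exactly at the boundary case of Theorem \ref{transrank}(i). The first step is therefore to apply Theorem \ref{transrank} and eliminate all the exceptional possibilities: item (ii) gives $W \cong \mathrm{B}_n$, not $\mathrm{D}_n$; items (iii) and (iv) force $2n \in \{6, 8\}$, i.e.\ $n \in \{3, 4\}$, contradicting $n \geq 6$ (and in any case the listed groups are not $\mathrm{D}_n$). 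So the only way rank $n$ can occur is the borderline of (i), $r = 2n/2$, and Theorem \ref{transrank} by itself does not forbid this; we need a finer argument.

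The key structural step is to exploit the decomposition $W = SN$ with $S \cong \mathrm{Sym}(n)$ and $N$ the elementary abelian subgroup of sign changes of order $2^{n-1}$, together with Lemma \ref{intN}. Write the given C-string as $\{t_1, \dots, t_n\}$ to avoid clashing with the symbol $S$; set $H = W_{1,\dots,n-1} = \langle t_1, \dots, t_{n-1}\rangle$, a string C-group of rank $n-1$. I would like to show $H$ maps onto a large symmetric quotient. Consider the quotient $W / N \cong \mathrm{Sym}(n)$ and the images $\bar{t_i}$ of the generators; these generate $\mathrm{Sym}(n)$ and still satisfy the string commuting relations, so $(\mathrm{Sym}(n), \{\bar{t_1}, \dots, \bar{t_n}\})$ is a string group generated by involutions with at most $n$ generators, and by Theorem \ref{independ}(i), $\mu(\mathrm{Sym}(n)) \leq n - 1$, so the images cannot all be independent — at least one relation collapses mod $N$. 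The plan is to pin down, using the string structure, that the only way $n$ independent involutions of $W$ can project to a generating set of $\mathrm{Sym}(n)$ is if $n - 1$ of them already generate a complement $\cong \mathrm{Sym}(n)$ and the remaining one lies in a coset meeting $N$ nontrivially; then $W_{1,\dots,n-1}$ (or a suitable $(n-1)$-subset) both projects onto $\mathrm{Sym}(n)$ and, being of rank $n-1$ with $\mu(\mathrm{Sym}(n)) = n-1$, must equal a Coxeter-generated $\mathrm{Sym}(n)$ by Theorem \ref{independ}(ii); one then checks $t_n \in N \setminus Z(W)$ forces $W = W_{1,\dots,n-1}$ by Lemma \ref{intN}, contradicting $t_n \notin W_{1,\dots,n-1}$ (which holds since the C-string is a minimal generating set).

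The main obstacle I anticipate is the bookkeeping in the middle step: showing that the ``collapse mod $N$'' is concentrated in a single generator and that the resulting rank-$(n-1)$ subgroup is a full symmetric complement rather than some smaller subgroup of $\mathrm{Sym}(n)$, or a subgroup that only surjects onto $\mathrm{Sym}(n)$ with a larger kernel. Here the string property is essential: because the generators commute unless consecutively indexed, the ``defect'' subgroup $W \cap N$-structure interacts with the chain $W_{1} \leq W_{1,2} \leq \cdots \leq W_{1,\dots,n}$ in a controlled way, and I would track the ranks of the intersections $W_{1,\dots,k} \cap N$ along this chain, using the intersection property to prevent these from being too large too early. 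An alternative, possibly cleaner, route is to bound $\mu(W)$ directly: show $\mu(\mathrm{D}_n) \leq n - 1$ for $n$ even by combining $\mu(\mathrm{Sym}(n)) = n-1$ with the fact that $N \cong (C_2)^{n-1}$ contributes independent elements only up to the rank of $N/(N \cap \langle \text{rest}\rangle)$, and that for $n$ even the nontrivial $W$-submodule structure of $N$ (with $Z(W)$ the unique proper nonzero submodule) obstructs gaining the extra independent element — this is exactly where the parity of $n$ enters, since for $n$ odd $N$ is an irreducible $\mathrm{Sym}(n)$-module and the count goes through to give $n$. Either way, the parity hypothesis must be used precisely at the point where $Z(W) \neq 1$, i.e.\ $n$ even, gives the ``extra room'' in $N$ that a rank-$n$ string would need but cannot independently realize.
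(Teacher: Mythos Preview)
Your overall architecture matches the paper's: assume a rank-$n$ C-string, pass to $\overline{W}=W/N\cong\mathrm{Sym}(n)$, invoke Whiston's bound $\mu(\mathrm{Sym}(n))\le n-1$ to force a collapse among the images $\overline{t_i}$, then use Cameron--Cara (Theorem~\ref{independ}(ii)) to pin down an $(n-1)$-subset as Coxeter generators, and finish with the $Z(W)$/parity obstruction via Lemma~\ref{intN}. That is exactly the paper's route. However, two of your stated steps are wrong, not merely sketchy.

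First, the endgame you describe is inverted. You write that one checks $t_n\in N\setminus Z(W)$ and then ``$W=W_{1,\dots,n-1}$ by Lemma~\ref{intN}''. But Lemma~\ref{intN} requires $H\cap N\not\le Z(W)$ with $H=W_{1,\dots,n-1}$, and $t_n\notin H$, so knowing $t_n\in N\setminus Z(W)$ puts nothing into $H\cap N$; the implication simply does not fire. The paper's actual contradiction goes the other way: once the $(n-1)$ images are Coxeter generators, the string relations force $t_n$ to commute with $t_1,\dots,t_{n-2}$, hence $\overline{t_n}\in C_{\overline W}(\mathrm{Sym}(n-1))=1$, so $t_n\in N$; and because $n$ is even, $n-1$ is odd, whence $C_N(\mathrm{Sym}(n-1))=Z(W)$, giving $t_n\in Z(W)$. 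This contradicts an earlier step (which you omit) showing that no generator can lie in $Z(W)$: if some $s_i\in Z(W)$ then $\langle S\setminus\{s_i\}\rangle N=W$ with a non-central $N$-intersection, so Lemma~\ref{intN} yields $\langle S\setminus\{s_i\}\rangle=W$, violating independence. So the parity enters by forcing $t_n$ \emph{into} $Z(W)$, not out of it. You also need to handle the possibility that only $n-2$ of the images are independent (your acknowledged ``main obstacle''); the paper treats this as a separate case and rules it out with a short argument again using Lemma~\ref{intN}.

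Second, your proposed alternative of proving $\mu(\mathrm D_n)\le n-1$ for $n$ even is false: the standard Coxeter generators $\beta_0,\beta_1,\dots,\beta_{n-1}$ of $\mathrm D_n$ already form an independent set of size $n$, so $\mu(\mathrm D_n)\ge n$ for every $n$. The obstruction to rank $n$ is specific to the \emph{string} condition, not to independence in general, and cannot be obtained from a bound on $\mu(W)$.
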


\begin{proof} By Lemma \ref{bndn} we may regard $W$ as a subgroup of $\mathrm{Sym}(2n)$ and hence, appealing to Theorem \ref{transrank}, $r_{\mathrm{max}}(W) \leq n$. So we must show that $W$ has no C-strings of rank $n$. Assuming that $W$ has a C-string $S$ with $|S| = n$, we seek a contradiction. Let $S = \{s_1, \dots , s_n \}$ with $s_i s_j =s_j s_i$ for all $i, j \in I = \{1, \dots , n \}$ with $|i - j| \geq 2$.  Put $\overline{W} = W/N$ where $N$ is the subgroup of $W$ consisting of the even sign changes, and we use the bar convention. So $\overline{W} \cong \mathrm{Sym}(n)$.\\

(\ref{Dnneven}.1) For $i \in I, s_i \notin Z(W)$.\\

Suppose that $s_i \in Z(W)$ and set $H = \langle S \setminus \{s_i \} \rangle$.  Then $W/Z(W) = HZ(W)/Z(W)$ and therefore $HZ(W) = W = HN$ with $H \cap N \not\leq Z(W)$. Thus $W = H$ by Lemma \ref{intN}, against $S$ satisfying the intersection condition. This proves (\ref{Dnneven}.1).\\

Let $T \subseteq S$ be such that $\overline{T}$ is an independent generating set for $\overline{W}$. Hence $|\overline{T}| \leq n - 1$ by Theorem \ref{independ}(i). Further $\overline{T}$ is a connected string in $\overline{W}$. For if not then $\overline{T} = \overline{T}_1 \cup \overline{T}_2$, $T_i \subseteq S$ with $[\overline{T}_1, \overline{T}_2] = 1$ and $\overline{T}_1 \not= \varnothing \not= \overline{T}_2$. But then $\langle \overline{T}_1 \rangle \langle \overline{T}_2 \rangle =  \overline{W}$ with $[\langle \overline{T}_1 \rangle ,  \langle \overline{T}_2 \rangle ] = 1$, contrary to $\overline{W} \cong \mathrm{Sym}(n)$.\\

(\ref{Dnneven}.2) $|\overline{T}| = n - 1$ or $n - 2$. Moreover, if $|\overline{T}| = n - 2$, then $\overline{T} = \{ \overline{s}_2,  \dots , \overline{s}_{n - 1} \}$.\\

Suppose that $|\overline{T}| \leq n - 3$. Then there exists $s \in S$ such that $[T,s] = 1$. From $\langle \overline{T} \rangle = \overline{W}$, we get $\overline{s} \in Z(\overline{W})$ and then, as $\overline{W} \cong \mathrm{Sym}(n)$, we have $s \in N$. Hence, using (\ref{Dnneven}.1) and Lemma \ref{intN}, $W = \langle T \cup \{s\} \rangle$ which is impossible as $S$ satisfies the intersection property. Thus $|\overline{T}| = n - 1$ or $n - 2$.  A similar argument applies to show $\overline{T} = \{ \overline{s}_2, \dots \overline{s}_{n - 1} \}$ when $|\overline{T}| = n - 2$.\\

(\ref{Dnneven}.3) $|\overline{T}| \not= n - 1$.\\

Suppose that $|\overline{T}| = n - 1$.  We may assume, as $\overline{T}$ is connected, that $\overline{T} = \{ \overline{s}_1,  \dots , \overline{s}_{n - 1} \}$. Put $\overline{X} = \langle \overline{s}_1,  \dots , \overline{s}_{n - 2} \rangle$. By Theorem \ref{independ}(ii) $\overline{T}$ consists of the Coxeter generators for $\overline{W}$. Thus $\overline{X} \cong \mathrm{Sym}(n-1)$. Because $[s_n,s_i] =1$ for $i =1, \dots, n-2$, $\overline{s}_n \in C_{\overline{W}}(\overline{X})$. Since $C_{\overline{W}}(\overline{X}) = 1$, $s_n \in N$. Now $n - 1$ being odd implies $C_N(\overline{X}) = Z(W)$ giving $s_n \in Z(W)$, against (\ref{Dnneven}.1). This rules out $|\overline{T}| = n - 1$.\\

(\ref{Dnneven}.4) $|\overline{T}| \not= n - 2$.\\

Assume that $|\overline{T}| = n - 2$. By (\ref{Dnneven}.2) we have  $\overline{T} = \{ \overline{s}_2,  \dots , \overline{s}_{n - 1} \}$. Put $X = \langle s_2, \dots , s_{n-1} \rangle$ and $X_1 = \langle s_1,s_2, \dots , s_{n-1} \rangle$. Then $W = XN = X_1 N$ and, as $X \leq X_1, X_1 = X(X_1 \cap N)$. If $X_1 \cap N = 1$, then $X_1 \cong \mathrm{Sym}(n)$ and so $X=X_1$. But this contradicts the intersection condition. Thus $X_1 \cap N \not= 1$.  If $X_1 \cap N \not= Z(W)$, then Lemma \ref{intN} forces $X_1 =W$, also a contradiction. Hence $X_1 \cap N = Z(W)$ and consequently $X_1 = XZ(W)$. A similar argument applies for $X_n = \langle s_2, \dots , s_{n-1}, s_n \rangle$ to also yield $X_n = XZ(W)$. But then 
$$W = \langle s_1, \dots,  s_n \rangle \leq XZ(W) \not= W,$$
so proving (\ref{Dnneven}.4).\\

Combining (\ref{Dnneven}.2), (\ref{Dnneven}.3) and (\ref{Dnneven}.4) yields the desired contradiction, so establishing Theorem \ref{Dnneven}.
\end{proof}

In the remainder of this section we construct examples of C-strings for $\mathrm{D}_n$.

For every integer $n \ge 5$, we define $t_1,\dots,t_n$ in $\mathrm{Sym}(2n)$ as follows.
\begin{align*}
\begin{split}
t_1&=\prod_{j=2}^{n}(j,n+j), \\
t_i&=(i-1,i)(n+i-1,n+i) \text{ for every } 2 \leq i \leq n.
\end{split}
\end{align*}

\begin{lemma} \label{csp1}
For every $1 \leq i,j \leq n$, we have
\begin{equation*}
\text{order of} \; t_i t_j = \begin{cases}
1, & \text{ if } \ i=j, \\
2, & \text{ if } \ |i-j| \ge 2, \\
3, & \text{ if } \ |i-j| = 1 \text{ and } \{i,j\} \neq \{1,2\}, \\
4, & \text{ if } \ \{i,j\} = \{1,2\}.
\end{cases}.
\end{equation*}
\end{lemma}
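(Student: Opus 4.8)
The plan is a direct case analysis on $|i-j|$, working throughout with the explicit permutations. To begin I would note that each $t_i$ is a product of disjoint transpositions, hence an involution, so $t_it_j=1$ precisely when $i=j$; moreover the $t_i$ are pairwise distinct, so in every remaining case it is enough to decide whether $t_i$ and $t_j$ commute (which then forces $t_it_j$ to have order $2$) and, failing that, to read off the cycle type of $t_it_j$. In what follows I may assume $i<j$.

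For $|i-j|\ge 2$ I would split off two sub-cases. If $2\le i<j$, then the support $\{i-1,i,n+i-1,n+i\}$ of $t_i$ is disjoint from the support $\{j-1,j,n+j-1,n+j\}$ of $t_j$ (since $j-i\ge 2$ gives $\{i-1,i\}\cap\{j-1,j\}=\varnothing$, and an index $\le n$ never equals an index $>n$), so $t_i$ and $t_j$ commute and $t_it_j$ has order $2$. If instead $i=1$ and $j\ge 3$ the two supports overlap, so here I would argue by conjugation: since $j-1\ge 2$, the permutation $t_1$ interchanges $j-1\leftrightarrow n+j-1$ and $j\leftrightarrow n+j$, whence $t_1(j-1,j)t_1=(n+j-1,n+j)$ and $t_1(n+j-1,n+j)t_1=(j-1,j)$, so that $t_1t_jt_1=t_j$; thus $t_1$ and $t_j$ commute and $t_1t_j$ has order $2$.

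For $|i-j|=1$ there are again two sub-cases. If $j=i+1$ with $i\ge 2$ (equivalently $\{i,j\}\ne\{1,2\}$), then $t_it_{i+1}=(i-1,i)(n+i-1,n+i)(i,i+1)(n+i,n+i+1)=(i-1,i,i+1)(n+i-1,n+i,n+i+1)$, a product of two disjoint $3$-cycles, so its order is $3$. For the exceptional pair $\{i,j\}=\{1,2\}$, evaluating $t_1t_2$ on the points it moves shows that it acts as the $4$-cycle $(1,n+2,n+1,2)$ on $\{1,2,n+1,n+2\}$ and as $\prod_{k=3}^{n}(k,n+k)$ on the remaining points, so $t_1t_2=(1,n+2,n+1,2)\prod_{k=3}^{n}(k,n+k)$ has order $\mathrm{lcm}(4,2)=4$. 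Collecting the four cases gives the stated table.

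The computation is routine, and the only step needing a little care is the sub-case $i=1$, $j\ge 3$: there $t_1$ no longer has the uniform two-transposition shape of the other generators, the quick disjoint-support argument is unavailable, and one instead checks commutativity by conjugation (or by tracking the action on the at most eight points actually moved).
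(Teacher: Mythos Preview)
Your proof is correct and follows essentially the same case analysis as the paper: both note that the $t_i$ are involutions, compute the adjacent products $t_it_{i+1}$ and $t_1t_2$ explicitly, and observe that for $2\le i<j$ with $j-i\ge 2$ the supports are disjoint. The one minor difference is the sub-case $i=1$, $j\ge 3$: the paper writes out $t_1t_j$ explicitly as a product of disjoint transpositions, whereas you establish commutativity by conjugation; both arguments are equally elementary and lead to the same conclusion (your cycle notation uses the opposite composition convention from the paper, but this does not affect any of the orders).
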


\begin{proof}
Each of the elements $t_1,\dots,t_n$ are involutions as they are defined as products of pairwise disjoint transpositions. Note that
\begin{align*}
t_1 t_2 &=(1,2,n+1,n+2) \prod_{j=3}^n (j,n+j), \\
t_i t_{i+1}&=(i-1,i+1,i)(n+i-1,n+i+1,n+i) \text{ for every } 2 \leq i \leq n.
\end{align*}
For every $2 \leq i,j \leq n$ with $|i-j|\ge2$, $t_i$ and $t_j$ are disjoint, whereas for every $3 \leq k \leq n$, note that 
\begin{equation*}
t_1 t_k = (k-1,n+k)(k,n+k-1) \prod_{\substack{2 \leq j \leq n \\ j \notin \{k-1,k\}}} (j,n+j).
\end{equation*}
As such, $t_i t_j$ is a product of pairwise disjoint transpositions for every $1 \leq i,j \leq n$ with $|i-j|\ge2$. The lemma follows from examining each of the products $t_i t_j$ in disjoint cycle notation.
\end{proof}

\begin{lemma} \label{csp2} 
$\langle t_1,\dots,t_n \rangle$ is a string C-group.
\end{lemma}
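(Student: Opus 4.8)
The plan is to get the string property for free and then verify the intersection property by a build-up induction on the number of generators.

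By Lemma~\ref{csp1} the product $t_it_j$ has order $2$ whenever $|i-j|\ge 2$, so for each $m$ with $2\le m\le n$ the pair $(\langle t_1,\dots,t_m\rangle,\{t_1,\dots,t_m\})$ is a string group generated by involutions, and only the intersection property remains to be checked. I would prove, by induction on $m$, that $\langle t_1,\dots,t_m\rangle$ is a string C-group; the case $m=n$ is the assertion of the lemma. For the base case $m=2$ the involutions $t_1$ and $t_2$ are visibly distinct, and any rank-$2$ string group generated by two distinct involutions trivially satisfies the intersection property (the only non-vacuous instance being $\langle t_1\rangle\cap\langle t_2\rangle=1$).

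For the inductive step (so $m\ge 3$) I would invoke Lemma~\ref{ip}(ii) for $(\langle t_1,\dots,t_m\rangle,\{t_1,\dots,t_m\})$ with $r=m$. The subgroup $\langle t_2,\dots,t_m\rangle$ acts on $\{1,\dots,m\}$ by the adjacent transpositions $(i-1,i)$ and, in parallel, on $\{n+1,\dots,n+m\}$, fixing all remaining points; hence it is the standard copy of $\mathrm{Sym}(m)$ (this is Lemma~\ref{sn} together with the orders computed in Lemma~\ref{csp1}), which, with $t_2,\dots,t_m$ as its Coxeter generators, is a string C-group, while $\langle t_2,\dots,t_{m-1}\rangle$ is the point stabilizer $\mathrm{Sym}(m-1)$, maximal in $\mathrm{Sym}(m)$. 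Combined with the inductive hypothesis that $\langle t_1,\dots,t_{m-1}\rangle$ is a string C-group, Lemma~\ref{ip}(ii) then reduces everything to showing $t_m\notin\langle t_1,\dots,t_{m-1}\rangle$. For this I would use an orbit argument: $t_1$ interchanges the points $m$ and $n+m$ (since $2\le m\le n$), whereas each of $t_2,\dots,t_{m-1}$ fixes both $m$ and $n+m$ (those transposition pairs involve only points of $\{1,\dots,m-1\}\cup\{n+1,\dots,n+m-1\}$), so $\{m,n+m\}$ is an orbit of $\langle t_1,\dots,t_{m-1}\rangle$. Since $t_m=(m-1,m)(n+m-1,n+m)$ sends $m-1$ to $m$, it does not preserve this orbit, whence $t_m\notin\langle t_1,\dots,t_{m-1}\rangle$, and the induction goes through; taking $m=n$ finishes the proof.

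The only step that is not routine bookkeeping with the quoted lemmas is this last orbit computation, so I expect it to be the crux, although it is short. One should also be mildly careful at the smallest index $m=3$ — where ``$t_2,\dots,t_{m-1}$'' is just $t_2$ and the facts about $\mathrm{Sym}(3)$ (maximality of the point stabilizer, being a string C-group) still have to be noted — but no genuine difficulty arises there.
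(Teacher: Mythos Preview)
Your argument is correct and matches the paper's proof essentially line for line: the paper also inducts on the number of generators, invokes Lemma~\ref{ip}(ii) via $\langle t_2,\dots,t_k\rangle\cong\mathrm{Sym}(k)$ with $\langle t_2,\dots,t_{k-1}\rangle\cong\mathrm{Sym}(k-1)$ maximal, and rules out $t_k\in\langle t_1,\dots,t_{k-1}\rangle$ by the same orbit observation (that $k-1$ and $k$ lie in different $\langle t_1,\dots,t_{k-1}\rangle$-orbits). The only cosmetic difference is that the paper starts the induction at $k=3$ by directly checking $\langle t_1,t_2\rangle\cap\langle t_2,t_3\rangle=\langle t_2\rangle$, whereas you start at $m=2$ and let the inductive step absorb $m=3$.
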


\begin{proof}
We prove by induction on $k$ that $\langle t_1,\dots,t_k\rangle$ is a string C-group for every $3 \leq k \leq n$. For the base case $k=3$, it is easy to verify that $\langle t_1,t_2 \rangle \cap \langle t_2,t_3 \rangle = \langle t_2 \rangle$. So now suppose that $k>3$ and that $\langle t_1,\dots,t_{k-1}\rangle$ is a string C-group. Now by Lemmas \ref{sn} and \ref{csp1}, $\langle t_2,\dots,t_k \rangle \cong \mathrm{Sym}(k)$ is a string C-group and $\langle t_2,\dots,t_{k-1}\rangle \cong \mathrm{Sym}(k-1)$ which is maximal in $\langle t_2,\dots,t_k \rangle$. We also have $t_k \notin \langle t_1,\dots,t_{k-1}\rangle$ because $t_k$ moves $k-1$ to $k$, which are in different $\langle t_1,\dots,t_k\rangle$-orbits. It follows from Lemma \ref{ip} that $\langle t_1,\dots,t_k\rangle$ is a string C-group.
\end{proof}

\begin{lemma} \label{csp3}
Let $n$ be odd and $n \ge 5$. Then $\langle t_1,\dots,t_n \rangle$ is isomorphic to $\mathrm{D}_n$.
\end{lemma}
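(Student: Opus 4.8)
The plan is to show that the group $G = \langle t_1, \dots, t_n \rangle \leq \mathrm{Sym}(2n)$ coincides with the copy of $\mathrm{D}_n$ from Lemma~\ref{bndn}. First I would establish that $G$ is contained in (some conjugate of) the Coxeter group $\mathrm{B}_n$ sitting inside $\mathrm{Sym}(2n)$, namely the stabilizer of the partition $\{\{1,n+1\}, \{2,n+2\}, \dots, \{n,2n\}\}$ into $n$ blocks of size $2$. Each generator $t_i$ visibly preserves this block system: $t_1$ swaps the two points inside blocks $\{2,n+2\}, \dots, \{n,2n\}$ and fixes block $\{1,n+1\}$ setwise, while for $2 \le i \le n$ the element $t_i$ swaps blocks $\{i-1, n+i-1\}$ and $\{i, n+i\}$ respecting the labelling. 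So $G \leq C_2 \wr \mathrm{Sym}(n) \cong \mathrm{B}_n$. Writing $\pi \colon \mathrm{B}_n \to \mathrm{Sym}(n)$ for the quotient by the base group $C_2^n$ (the group of all sign changes), we have $\pi(t_1) = 1$ and $\pi(t_i) = (i-1,i)$ for $2 \le i \le n$, so $\pi(G) = \mathrm{Sym}(n)$. Hence $G = \langle t_2, \dots, t_n\rangle \cdot (G \cap C_2^n)$, and by Lemma~\ref{csp1} (or Lemma~\ref{sn}) $\langle t_2, \dots, t_n\rangle \cong \mathrm{Sym}(n)$ maps isomorphically onto $\pi(G)$, so in fact $G = \langle t_2,\dots,t_n\rangle \ltimes (G \cap C_2^n)$ and everything reduces to identifying the base-group part $G \cap C_2^n$.

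Next I would pin down $G \cap C_2^n$. Identify $C_2^n$ with $\mathbb{F}_2^n$ via the coordinates indexed by the blocks $\{i, n+i\}$, so that a sign change is the vector recording which blocks get their two points swapped. Under this identification $t_1$ corresponds to the vector $(0,1,1,\dots,1)$, and $\langle t_2,\dots,t_n\rangle \cong \mathrm{Sym}(n)$ acts on $\mathbb{F}_2^n$ by permuting coordinates. Thus $G \cap C_2^n$ contains the $\mathrm{Sym}(n)$-orbit of $(0,1,\dots,1)$ — i.e. all weight-$(n-1)$ vectors — and the $\mathbb{F}_2$-span thereof. When $n$ is odd, the weight-$(n-1)$ vectors are precisely the complements of the weight-$1$ vectors; their span is the full even-weight subspace (of dimension $n-1$): indeed summing two such vectors $(1,\dots,\hat 0_i,\dots,1) + (1,\dots,\hat 0_j, \dots,1)$ gives $e_i + e_j$ (using $n-1$ even), and the $e_i+e_j$ span the even-weight subspace. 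Conversely $G \cap C_2^n$ is contained in the even-weight subspace $N$ of Lemma~\ref{bndn}: every $t_i$ is a product of two transpositions, hence even, so $G \le \mathrm{Alt}(2n)$, and an element of $C_2^n$ lies in $\mathrm{Alt}(2n)$ exactly when it swaps an even number of blocks. Therefore $G \cap C_2^n = N$, the subgroup of even sign changes, which has order $2^{n-1}$.

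Finally I would assemble: $G = \langle t_2,\dots,t_n\rangle \ltimes N$ with $\langle t_2,\dots,t_n\rangle \cong \mathrm{Sym}(n)$ acting on $N \cong \mathbb{F}_2^{n-1}$ as the even-weight submodule of the natural permutation module — which is exactly the structure of $\mathrm{D}_n$ as recorded after Lemma~\ref{bndn}. To make the isomorphism with the specific generators $\beta_0,\dots,\beta_{n-1}$ of Lemma~\ref{bndn} transparent, one can simply note that $\langle t_2,\dots,t_n\rangle = \langle \beta_1,\dots,\beta_{n-1}\rangle$ after relabelling $t_{i+1} \mapsto \beta_i$ (both are the standard $\mathrm{Sym}(n)$ on the blocks), and that $t_1 \in N = \langle \beta_0,\dots,\beta_{n-1}\rangle \cap C_2^n$; since $\langle \beta_0,\dots,\beta_{n-1}\rangle = \langle \beta_1,\dots,\beta_{n-1}\rangle \ltimes N$ has order $2^{n-1}\cdot n!$, comparing orders gives $G = \langle \beta_0,\dots,\beta_{n-1}\rangle \cong \mathrm{D}_n$.

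The main obstacle is the base-group computation in the second paragraph: one must verify precisely that the $\mathrm{Sym}(n)$-span of the single vector $(0,1,\dots,1)$ is all of the even-weight subspace, and here the hypothesis that $n$ is \emph{odd} is essential — for $n$ even the vector $(0,1,\dots,1)$ has odd weight and lies outside the even-weight subspace, so the same generators would instead produce a $\mathrm{Sym}(n)$-complement of a different submodule (this is presumably why the even case is handled separately elsewhere). Everything else is a routine check in disjoint-cycle notation, most of which is already supplied by Lemma~\ref{csp1}.
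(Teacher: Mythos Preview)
Your argument is correct and takes a genuinely different route from the paper. The paper proves $G=\langle t_1,\dots,t_n\rangle=\langle\beta_0,\dots,\beta_{n-1}\rangle=H$ by direct mutual containment of generators: since $t_{i}=\beta_{i-1}$ for $i\ge 2$, it only remains to exhibit $\beta_0$ as a word in the $t_i$ and $t_1$ as a word in the $\beta_j$. For the first, the paper computes $(t_1t_2)^2=(1,n{+}1)(2,n{+}2)=\beta_0$; for the second it writes $t_1=\alpha_{2,3}\alpha_{4,5}\cdots\alpha_{n-1,n}$ (here $n$ odd is used to pair up $\{2,\dots,n\}$) and manufactures each $\alpha_{i,j}=(i,n{+}i)(j,n{+}j)$ inside $H$ by explicit conjugations. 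Your approach is more structural: you identify $G\cap C_2^{\,n}$ as the $\mathbb{F}_2[\mathrm{Sym}(n)]$-submodule generated by the weight-$(n{-}1)$ vector $t_1$, observe that this is precisely the even-weight subspace $N$ when $n$ is odd, and then match $G=\langle t_2,\dots,t_n\rangle\ltimes N$ with the description of $\mathrm{D}_n$ following Lemma~\ref{bndn}. This makes the role of the parity hypothesis more transparent (it is exactly the condition that $t_1$ has even weight) and avoids any ad hoc element chasing, at the cost of invoking a little module theory. One small slip: you write ``every $t_i$ is a product of two transpositions''; this is true only for $i\ge 2$, while $t_1$ is a product of $n-1$ transpositions. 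The conclusion $G\le\mathrm{Alt}(2n)$ still holds because $n-1$ is even, so the argument is unaffected, but the sentence should be corrected.
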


\begin{proof}
Let $G=\langle t_1,\dots,t_n \rangle$, $H=\langle \beta_0,\beta_1,\dots,\beta_{n-1} \rangle$ where $\beta_0=(1,n+1)(2,n+2)$ and $\beta_i=(i,i+1)(n+i,n+i+1)$ for every $1 \leq i \leq n-1$. Then $H \cong \mathrm{D}_n$ by Lemma \ref{bndn} and we want to show that $G=H$. Since $t_i=\beta_{i-1}$ for every $2 \leq i \leq n$, it suffices to show that $\beta_0 \in G$ and $t_1 \in H$.
\par We note that $$t_1 t_2 = (1,2,n+1,n+2)\prod_{j=3}^n (j,n+j),$$ whence $\beta_0 = (1,n+1)(2,n+2)=(t_1 t_2)^2 \in G$. Now let $\alpha_{i,j}=(i,n+i)(j,n+j)$ and $\gamma_{i,j}=(i,j)(n+i,n+j)$ for every $2 \leq i,j \leq n$.  Note that $\alpha_{2,3}=\beta_0^{\beta_2\beta_1} \in H$. Now for $4 \leq k \leq n$, we have $$\gamma_{2,k}={\beta_2}^{\prod_{j=4}^k \beta_{j-1}} \text{ and } \gamma_{3,k}={\beta_2}^{\gamma_{2,k}}.$$ It follows that for every $4 \leq i,j \leq n$, we have $\alpha_{i,j}={\alpha_{2,3}}^{\gamma_{2,i}\gamma_{3,j}} \in H$ and hence, $t_1=\alpha_{2,3}\alpha_{4,5}\cdots\alpha_{n-1,n} \in H$.
\end{proof}

\begin{theorem}\label{nodd}
Let $n$ be odd and $n \ge 5$. Then $\{t_1,\dots,t_n\}$ is a rank $n$ C-string of $\mathrm{D}_n$ with Schl\"{a}fli type $\{4,3^{n-2}\}$. Therefore, $r_{\mathrm{max}}(\mathrm{D}_n)=n$ and there is a rank $r$ C-string of $\mathrm{D}_n$ for every $3 \leq r \leq n$.
\end{theorem}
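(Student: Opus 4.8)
The plan is to collect the preceding lemmas and then invoke the rank reduction machinery. First I would note that Lemma~\ref{csp1} already does almost all of the work: the displayed orders show $(t_it_j)^2=1$ whenever $|i-j|\ge 2$, so $\{t_1,\dots,t_n\}$ is a string group generated by involutions, and those same orders read off its Schl\"{a}fli type, namely $\mathrm{ord}(t_1t_2)=4$ and $\mathrm{ord}(t_it_{i+1})=3$ for $2\le i\le n-1$, i.e.\ $\{4,3^{n-2}\}$. Lemma~\ref{csp2} promotes this to a string C-group, and Lemma~\ref{csp3} --- the only place where the parity of $n$ is used --- identifies the group it generates as $\mathrm{D}_n$. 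Since the generating set of a string C-group is automatically independent (by the intersection property), $\{t_1,\dots,t_n\}$ is genuinely a rank $n$ C-string of $\mathrm{D}_n$.

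Next I would pin down $r_{\mathrm{max}}(\mathrm{D}_n)$. By Lemma~\ref{bndn}, $\mathrm{D}_n$ sits inside $\mathrm{Sym}(2n)$ as a transitive subgroup which is neither $\mathrm{Sym}(2n)$ nor $\mathrm{Alt}(2n)$, so Theorem~\ref{transrank} bounds the rank of any of its C-strings. The exceptional outcomes of that theorem do not occur: outcome (ii) would force $\mathrm{D}_n\cong C_2\wr\mathrm{Sym}(n)$, which has order $2^nn!$, twice $|\mathrm{D}_n|$; outcomes (iii) and (iv) require the degree $2n$ to lie in $\{6,8\}$, impossible since $n\ge 5$. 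Hence every C-string of $\mathrm{D}_n$ has rank at most $2n/2=n$, and the C-string constructed above attains it, so $r_{\mathrm{max}}(\mathrm{D}_n)=n$.

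For the intermediate ranks I would feed $(\mathrm{D}_n,\{t_1,\dots,t_n\})$ into Theorem~\ref{rankred}. This C-string is non-degenerate, since no entry of its Schl\"{a}fli type $\{4,3^{n-2}\}$ equals $2$, so its diagram is the path on $n$ nodes; moreover $p_3=\dots=p_{n-1}=3$ are all odd. Hence Theorem~\ref{rankred} produces a C-string of $\mathrm{D}_n$ of every rank $r$ with $3\le r\le n$, which completes the proof. The step I expect to require the most care is checking that the reduction really telescopes all the way down to rank $3$ rather than stopping short: the initial hypothesis $s_1\in\langle s_1s_3,s_4\rangle$ of Theorem~\ref{rankred} holds because $t_1$ commutes with both $t_3$ and $t_4$ while $\mathrm{ord}(t_3t_4)=3$ is odd, forcing $t_1$ to be a power of $(t_1t_3)t_4$, and one must verify that after each reduction the surviving tail of the Schl\"{a}fli type is again a run of $3$'s so that the analogous hypothesis is reinstated. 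Apart from this bookkeeping and the routine check that $\mathrm{D}_n$ avoids the exceptional cases of Theorem~\ref{transrank}, the argument is simply an assembly of Lemmas~\ref{csp1}--\ref{csp3}.
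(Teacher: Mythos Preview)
Your proposal is correct and follows essentially the same route as the paper: the paper's proof simply cites Lemmas~\ref{csp1}--\ref{csp3} for the first assertion, Theorem~\ref{transrank} for $r_{\mathrm{max}}(\mathrm{D}_n)=n$, and Theorem~\ref{rankred} for the intermediate ranks, without spelling out the checks you include. Your additional verifications (that $\mathrm{D}_n$ avoids the exceptional outcomes of Theorem~\ref{transrank}, that the C-string is non-degenerate, and that the oddness of $p_3,\dots,p_{n-1}$ pushes the reduction down to rank~$3$) are exactly the details one would want to see and are all correct; note that for the telescoping you can appeal directly to the ``In particular'' clause of Theorem~\ref{rankred} rather than re-verifying $s_1\in\langle s_1s_3,s_4\rangle$ at each stage.
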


\begin{proof}
The first assertion follows from Lemmas \ref{csp1}, \ref{csp2} and \ref{csp3}. The second and third assertion follows from Theorem \ref{transrank} and  Theorem \ref{rankred}, respectively.
\end{proof}

To describe our C-strings, we will use an interesting graph called the string C-group permutation representation (CPR) graph, first introduced in \cite{cpr}. Let $(G,\{ s_1, \dots, s_r \})$ be a string C-group identified as a permutation group of degree $m$. Then the \emph{CPR graph} of $G$ is an $r$-edge-labelled multigraph with vertex set $\{1,\dots,m\}$ such that $\{i,j\}$ is an edge of label $k$ if $s_k$ moves $i$ to $j$. Recall the notation for $W \cong \mathrm{D}_n$ when $W \leq \mathrm{Sym}(2n)$ as described in Section \ref{pre}. That is, $W = SN$ where $S \cong \mathrm{Sym}(n)$ and $N$ is the subgroup of even sign changes.

\begin{theorem}{\cite[Lemma 21]{p20}} \label{skeleton}
Let $n \ge 5$ and $3 \leq d \leq n-2$. Then $\mathrm{Sym}(n)$ has a C-string of rank $d$ with Schl\"{a}ffli type $\{3^{d-3},6,n-d+2\}$ whose CPR graph can be obtained by appending $n-d$ vertices and edges with labels $d-1$ and $d$ alternatively to the CPR graph of the $d$-simplex, as follows. 
\begin{center}
\begin{tikzpicture}[xscale=1.4]
	\vx (a) at (-4,0) {}; \node at (-3.5,0.25) {$1$};
	\vx (b) at (-3,0) {}; \node at (-2.5,0.25) {$2$};
	\vx (c) at (-2,0) {}; \node at (-1.5,0) {$\boldsymbol{\cdots}$};
	\vx (d) at (-1,0) {}; \node at (-0.5,0.25) {$d-1$};
	\vx (e) at (0,0) {}; \node at (0.5,0.25) {$d$};
	\vx (A) at (1,0) {}; \node at (1.5,0.25) {$d-1$};
	\vx (B) at (2,0) {}; \node at (2.5,0.25) {$d$};
	\vx (x) at (3,0) {}; \node at (3.5,0) {$\boldsymbol{\cdots}$};
	\vx (y) at (4,0) {};
  \draw (a) -- (b) -- (c) (d) -- (e) -- (A) -- (B) -- (x);
\end{tikzpicture}
\end{center}
\end{theorem}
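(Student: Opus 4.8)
The plan is to read the generators off the prescribed CPR graph and then verify, in order, the string property, the Schl\"{a}fli type, that the generators produce all of $\mathrm{Sym}(n)$, and finally the intersection property. Label the $n$ vertices of the path $1,2,\dots,n$ so that the edge joining $i$ and $i+1$ carries label $i$ for $1\le i\le d$, and carries the labels $d-1,d,d-1,d,\dots$ alternately for $d<i\le n-1$. Taking each generator to be the product of the transpositions sitting on the edges of its label gives $s_k=(k,k+1)$ for $1\le k\le d-2$, while \[ s_{d-1}=(d-1,d)(d+1,d+2)(d+3,d+4)\cdots,\qquad s_d=(d,d+1)(d+2,d+3)(d+4,d+5)\cdots \] are the two ``zigzag'' involutions on the remaining segment of the path. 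Each $s_k$ is a product of pairwise disjoint transpositions, hence an involution; and since $s_1,\dots,s_{d-2}$ are supported on $\{1,\dots,d-1\}$ while $s_{d-1},s_d$ are supported on $\{d-1,\dots,n\}$, one sees at a glance that $s_is_j=s_js_i$ whenever $|i-j|\ge 2$. Computing the type is routine: $s_is_{i+1}$ is a $3$-cycle, possibly times some disjoint transpositions, so has order $3$ for $i\le d-3$ and order $6$ for $i=d-2$, while $s_{d-1}s_d$ restricted to the $(n-d+2)$-point path $d-1,d,d+1,\dots,n$ is the product of the two matchings formed by alternate edges of that path, hence an $(n-d+2)$-cycle.

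For generation, put $G=\langle s_1,\dots,s_d\rangle$. The transpositions $s_1,\dots,s_{d-2}$ already generate $\mathrm{Sym}(\{1,\dots,d-1\})$, and a one-line commutator computation -- $s_{d-1}^{\,s_{d-2}}s_{d-1}$ is a $3$-cycle on $\{d-2,d-1,d\}$, which together with $s_{d-2}$ yields $(d-1,d)$ -- upgrades this to $\mathrm{Sym}(\{1,\dots,d\})\le G$. From here I would bootstrap one point at a time: multiplying $s_{d-1}$ by $(d-1,d)\in G$ and $s_d$ by $(d,d+1)\in G$ puts the ``pure--tail'' involutions $(d+1,d+2)(d+3,d+4)\cdots$ and $(d+2,d+3)(d+4,d+5)\cdots$ into $G$, and conjugating one of these by the symmetric group already built and multiplying back produces a $3$-cycle that attaches the next tail point; iterating gives $\mathrm{Sym}(\{1,\dots,n\})\le G$, so $G=\mathrm{Sym}(n)$.

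The substantive part is the intersection property, which I would prove by induction on $d$. The base case $d=3$ (type $\{6,n-1\}$) can be done by hand via an orbit computation showing $\langle s_1,s_2\rangle\cap\langle s_2,s_3\rangle=\langle s_2\rangle$ -- the point being that $\langle s_2,s_3\rangle$, a dihedral group on the $(n-1)$-gon, contains neither the transposition $(2,3)$ nor the pure--tail involution. For the inductive step, observe that $G_{2,\dots,d}$ fixes the point $1$ and, on the remaining $n-1$ points, is exactly the rank $d-1$ instance of the present construction for $\mathrm{Sym}(n-1)$ (its Schl\"{a}fli type is $\{3^{d-4},6,n-d+2\}$, which matches the parameters $(n-1,d-1)$, and $3\le d-1\le(n-1)-2$), so it is a string C-group by induction. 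It then suffices, via Lemma \ref{ip} or Lemma \ref{ip2}, to control $G_{1,\dots,d-1}$ and the intersections $G_{1,\dots,d-1}\cap G_{k,\dots,d}$ for $2\le k\le d-1$. Here $G_{1,\dots,d-1}$ is the same construction with the final long cycle suppressed: its orbits on $\{1,\dots,n\}$ are $\{1,\dots,d\}$, on which it induces $\mathrm{Sym}(\{1,\dots,d\})$, together with the $2$-point orbits of the single involution $s_{d-1}$ on $\{d+1,\dots,n\}$. From this description one reads off both its structure and its intersections with the groups $G_{k,\dots,d}$ (which fix $\{1,\dots,k-1\}$ pointwise and are transitive on $\{k,\dots,n\}$): any element lying in both must fix $\{1,\dots,k-1\}$, preserve $\{1,\dots,d\}$, and act on the tail as a power of $s_{d-1}$, which forces it into $G_{k,\dots,d-1}$.

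The main obstacle is exactly this last ingredient. The intermediate group $G_{1,\dots,d-1}$ is \emph{not} one of the simplex--plus--tail groups, so the induction does not close on it automatically; one is in effect obliged to carry a companion statement about these ``tailless'' groups alongside the main induction and to pin down their intersections with the $G_{k,\dots,d}$ carefully. An alternative that bypasses this is to verify the intersection property directly from a combinatorial criterion on the CPR graph -- connectivity of the subgraphs obtained by restricting to consecutive blocks of labels, as developed in \cite{cpr} -- which for a path-shaped graph like this one is transparent; this is presumably the route taken in \cite{p20}.
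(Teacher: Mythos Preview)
The paper does not prove this statement; it is quoted verbatim as \cite[Lemma~21]{p20} and used as a black box. So there is no ``paper's own proof'' to compare against---the comparison would have to be against the Fernandes--Leemans argument in \cite{p20}.

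On the substance of your sketch: the verification of the string property, the Schl\"afli type, and generation are all correct (for generation a cleaner route than your bootstrapping is to note that $s_{d-1}s_d$ is an $(n-d+2)$-cycle on $\{d-1,\dots,n\}$ in which $d-1$ and $d$ are adjacent, so together with the transposition $(d-1,d)\in\mathrm{Sym}(\{1,\dots,d\})\le G$ it already generates $\mathrm{Sym}(\{d-1,\dots,n\})$). Your candid assessment of the intersection-property step is accurate: inducting on $d$ alone does not close, because $G_{1,\dots,d-1}$ is not another instance of the construction. The argument in \cite{p20} runs the induction on $n$ rather than $d$ and handles $G_{1,\dots,d-1}$ directly as the group $\mathrm{Sym}(d)\times C_2^{\lfloor (n-d)/2\rfloor}$ (it is a direct product of the simplex group on $\{1,\dots,d\}$ with the tail involution), which is visibly a string C-group with the required intersections. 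Your suggested alternative via Pellicer's CPR-graph connectivity criterion \cite{cpr} would also work, since the graph here is a path and every label-interval subgraph is a disjoint union of subpaths.
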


\begin{lemma} \label{ipdn}
Let $G=\langle s_1,\dots, s_r\rangle \leq W \leq \mathrm{Sym}(2n)$ with $W \cong \mathrm{D}_n$ be a string group generated by involutions. If $G_{1,\dots,r-1}=S$ is a string C-group and $s_r \in N \setminus Z(G)$, then $G=W$ and is also a string C-group.
\end{lemma}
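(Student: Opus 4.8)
The plan is to verify the two claims — that $G = W$ and that $G$ is a string C-group — more or less independently, since the first is a group-theoretic computation and the second is an application of the intersection-property lemmas already at our disposal. For $G = W$: we have $G = G_{1,\dots,r-1}\langle s_r\rangle = S\langle s_r\rangle$ with $s_r \in N$. Since $S \cong \mathrm{Sym}(n)$ is a complement to $N$ in $W$ and $s_r \in N$, we get $G = S\langle s_r\rangle \le SN = W$, and moreover $G = G(G\cap W)$... more to the point, $W = SN$ and $G \cap N \supseteq \langle s_r\rangle$, so $W = SN = S\langle s_r\rangle N' $ is not quite immediate — instead I would argue directly that $G = HN$ for $H = S \le G$, because $S \le G$ and $s_r \in N$ give $G = S\langle s_r \rangle$, hence $GN = SN = W$, i.e. $W = GN$; but $G \le W$ so actually I want $W = GN$ with $G$ in the role of the subgroup $H$ of Lemma \ref{intN}. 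We have $G \cap N \ni s_r$ and $s_r \notin Z(G)$; since $Z(G) = Z(W) \cap G$ and in fact $Z(W) \le G$ (as $Z(W) \le N \cap Z(W)$ and... ), we should check $s_r \notin Z(W)$. If $s_r \in Z(W)$ then $s_r$ would centralize $S = G_{1,\dots,r-1} \le G$, so $s_r \in Z(G)$, contradicting the hypothesis $s_r \in N \setminus Z(G)$. Hence $G \cap N \not\le Z(W)$, and Lemma \ref{intN} (applied with $H = G$) yields $G = W$.

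For the string C-group property, I would apply Lemma \ref{ip}(ii) with the full index set $I = \{1,\dots,r\}$: its hypotheses are that $G_{1,\dots,r-1}$ and $G_{2,\dots,r}$ are string C-groups, that $s_r \notin G_{1,\dots,r-1}$, and that $G_{2,\dots,r-1}$ is maximal in $G_{2,\dots,r}$. The first of these holds since $G_{1,\dots,r-1} = S \cong \mathrm{Sym}(n)$ is a string C-group by hypothesis. The condition $s_r \notin G_{1,\dots,r-1} = S$ is immediate since $s_r \in N \setminus\{1\}$ and $S \cap N = 1$. It remains to handle $G_{2,\dots,r}$ and the maximality of $G_{2,\dots,r-1}$ inside it.

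The main obstacle, then, is analyzing $G_{2,\dots,r} = \langle s_2,\dots,s_{r-1}, s_r\rangle$. Here $G_{2,\dots,r-1} = \langle s_2,\dots,s_{r-1}\rangle$ is a subgroup of $S \cong \mathrm{Sym}(n)$ which is a string C-group (being a string-C-subgroup of a string C-group via the intersection property for $S$); write it as $\mathrm{Sym}(n-1)$ acting naturally (this is the typical situation, where $s_1$ is the "extra" Coxeter generator, though in general one only knows $G_{2,\dots,r-1}$ is some subgroup — I would need the structural fact that it is $\mathrm{Sym}(n-1)$ or argue more carefully). Since $s_r \in N$ is an even sign-change not in $G_{2,\dots,r-1}$, the group $G_{2,\dots,r} = G_{2,\dots,r-1}\langle s_r\rangle$ is either all of $\mathrm{Sym}(n-1) \times C_2$-type or, using that $s_r$ normalizes... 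Rather, I would show $G_{2,\dots,r-1}$ is maximal in $G_{2,\dots,r}$ by a direct index computation: $[G_{2,\dots,r} : G_{2,\dots,r-1}] = 2$ because $s_r^2 = 1$ and $s_r \notin G_{2,\dots,r-1}$ and $s_r$ normalizes $G_{2,\dots,r-1}$ (it commutes with $s_2,\dots,s_{r-2}$ by the string property, and one checks $s_{r-1}^{s_r} \in G_{2,\dots,r-1}$ using the relation structure — this is the delicate point, since a priori $s_r$ need not normalize $\langle s_2,\dots,s_{r-1}\rangle$). Once $s_r$ is known to normalize $G_{2,\dots,r-1}$, index $2$ forces maximality, $G_{2,\dots,r}$ is a string C-group by Lemma \ref{ip}(i) or (ii) applied to its truncation, and Lemma \ref{ip}(ii) applies to give that $G$ is a string C-group. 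I expect the normalization step — verifying $s_{r-1}^{s_r} \in \langle s_2,\dots,s_{r-1}\rangle$, or circumventing it by instead showing directly that $G_{2,\dots,r-1} = G_{1,\dots,r-1}\cap G_{2,\dots,r}$ and invoking Lemma \ref{ip}(i) — to be where the real work lies.
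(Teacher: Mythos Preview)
Your argument for $G=W$ via Lemma~\ref{intN} is fine once you observe (as you do) that $s_r\in Z(W)$ would force $s_r\in Z(G)$; so $G\cap N\not\le Z(W)$ and Lemma~\ref{intN} applies with $H=G$.

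The string C-group part, however, has a real gap. Your main line is Lemma~\ref{ip}(ii), which needs $G_{2,\dots,r-1}$ maximal in $G_{2,\dots,r}$, and you hope to get this from index~$2$ by showing $s_r$ normalises $G_{2,\dots,r-1}$. In fact it never does under the stated hypotheses. Since $N\trianglelefteq W$, one has $s_{r-1}^{\,s_r}\in s_{r-1}N$, so $s_{r-1}^{\,s_r}\in G_{2,\dots,r-1}\le S$ would force $s_{r-1}^{\,s_r}s_{r-1}^{-1}\in S\cap N=1$, i.e.\ $[s_{r-1},s_r]=1$; combined with the string relations this makes $s_r$ central in $G$, contradicting $s_r\notin Z(G)$. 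Thus $s_r$ does not normalise $G_{2,\dots,r-1}$, the index is typically larger than~$2$ (indeed $G_{2,\dots,r}=G_{2,\dots,r-1}(G_{2,\dots,r}\cap N)$ and $G_{2,\dots,r}\cap N$ is the $G_{2,\dots,r-1}$-submodule of $N$ generated by $s_r$), and maximality is not available. Your fallback via Lemma~\ref{ip}(i) still requires $G_{2,\dots,r}$ to be a string C-group, which you have not established and which does not follow from the hypotheses without further work.

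The paper avoids all of this by using Lemma~\ref{ip2} instead of Lemma~\ref{ip}. That lemma only asks for $G_{1,\dots,r-1}$ to be a string C-group (given) together with $G_{1,\dots,r-1}\cap G_{k,\dots,r}=G_{k,\dots,r-1}$ for every $2\le k\le r-1$. Each such intersection is computed by two applications of the Dedekind modular law: from $s_r\in N$ one gets $G_{k,\dots,r}=G_{k,\dots,r-1}(G_{k,\dots,r}\cap N)$, and then
\[
G_{1,\dots,r-1}\cap G_{k,\dots,r}=G_{k,\dots,r-1}\bigl(G_{1,\dots,r-1}\cap G_{k,\dots,r}\cap N\bigr)=G_{k,\dots,r-1},
\]
since $G_{1,\dots,r-1}\cap N=S\cap N=1$. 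This is exactly the intersection you proposed to compute in your last sentence, but the key point is to feed it into Lemma~\ref{ip2} (for all $k$) rather than Lemma~\ref{ip}(i), so that no separate verification that $G_{2,\dots,r}$ is a string C-group is needed.
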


\begin{proof}
By Lemma \ref{intN} we have $G=W$. Let $2 \leq k \leq r-1$. Since $s_r \in N$, $G_{k,\dots,r} \leq G_{k,\dots,r-1}N$, and so $G_{k,\dots,r}=G_{k,\dots,r-1}(G_{k,\dots,r} \cap N)$ by the Dedekind law. Then by Dedekind law again,
\begin{eqnarray*}
    G_{1,\dots,r-1} \cap G_{k,\dots,r} &=& G_{1,\dots,r-1} \cap (G_{k,\dots,r-1}(G_{k,\dots,r} \cap N)) \\
    &=& G_{k,\dots,r-1}(G_{1,\dots,r-1} \cap G_{k,\dots,r} \cap N) \\
    &=& G_{k,\dots,r-1},
\end{eqnarray*}
as $G_{1,\dots,r-1} \cap N = S \cap N = 1$. Thus the lemma follows from Lemma \ref{ip2}.
\end{proof}

Let $n \ge 6$ be an even integer for the remainder of this section. We first construct rank three C-strings of $\mathrm{D}_n$, defining $t_1,t_2,t_3$ in $\mathrm{Sym}(2n)$ as follows.
\begin{align*}
\begin{split}
t_1 &= (1,2)(n+1,n+2)(n-1,2n-1)(n,2n), \\
t_2 &= (2,3)(4,5)\cdots(n-2,n-1)(n+2,n+3)(n+4,n+5)\cdots(2n-2,2n-1), \\
t_3 &= (3,4)(5,6)\cdots(n-1,n)(n+3,n+4)(n+5,n+6)\cdots(2n-1,2n).
%t_1 &= (1,2)(3,4)\cdots(n-3,n-2)(n+1,n+2)(n+3,n+4)\cdots(2n-3,2n-2), \\
%t_2 &= (2,3)(4,5)\cdots(n-2,n-1)(n+2,n+3)(n+4,n+5)\cdots(2n-2,2n-1), \\
%t_3 &= (1,n+1)(2,n+2)(n-1,n)(2n-1,2n).
\end{split}
\end{align*}

\begin{center}
\begin{tikzpicture}[xscale=1.4,yscale=1.4]
	\vx (a) at (-3,0) {}; \node at (-2.5,0.25) {$1$};
	\vx (b) at (-2,0) {}; \node at (-1.5,0.25) {$2$};
	\vx (c) at (-1,0) {}; \node at (-0.5,0.25) {$3$};
	\vx (x) at (0,0) {}; \node at (0.5,0) {$\boldsymbol{\cdots}$};
	\vx (A) at (1,0) {}; \node at (1.5,0.25) {$2$};
	\vx (B) at (2,0) {}; \node at (2.5,0.25) {$3$};
	\vx (C) at (3,0) {};
  \draw (a) -- (b) -- (c) -- (x) (A) -- (B) -- (C);
	\vx (-a) at (-3,-1) {}; \node at (-2.5,-1.25) {$1$};
	\vx (-b) at (-2,-1) {}; \node at (-1.5,-1.25) {$2$};
	\vx (-c) at (-1,-1) {}; \node at (-0.5,-1.25) {$3$};
	\vx (-x) at (0,-1) {}; \node at (0.5,-1) {$\boldsymbol{\cdots}$};
	\vx (-A) at (1,-1) {}; \node at (1.5,-1.25) {$2$};
	\vx (-B) at (2,-1) {}; \node at (2.5,-1.25) {$3$};
	\vx (-C) at (3,-1) {};
  \draw (-a) -- (-b) -- (-c) -- (-x) (-A) -- (-B) -- (-C);
  	\node at (1.8,-0.5) {$1$}; \node at (2.8,-0.5) {$1$};
  \draw (B) -- (-B) (C) -- (-C);
\end{tikzpicture}
\end{center}

\begin{lemma} \label{csr1}
$\{t_1,t_2,t_3\}$ is a rank 3 C-string of $\mathrm{D}_n$ with Schl\"{a}fli type $\{12,n-1\}$.
\end{lemma}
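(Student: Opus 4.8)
\textbf{Proof plan for Lemma~\ref{csr1}.} The plan is to verify the three defining properties of a string C-group of rank three: that $t_1,t_2,t_3$ are involutions generating a group isomorphic to $\mathrm{D}_n$, that the string property holds (here vacuous beyond checking $t_1$ and $t_3$ commute — indeed $t_1$ and $t_3$ are supported on disjoint sets of points, so $(t_1t_3)^2=1$), and finally the intersection property $\langle t_1,t_2\rangle\cap\langle t_2,t_3\rangle=\langle t_2\rangle$. The Schl\"afli type will come out of computing the orders of $t_1t_2$ and $t_2t_3$ directly from their disjoint cycle decompositions.

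First I would compute $t_1t_2$ and $t_2t_3$ explicitly in cycle notation. Since $t_2$ and $t_3$ together with (most of) $t_1$ live inside the $\mathrm{Sym}(n)$-part on the "top" block $\{1,\dots,n\}$ mirrored on the "bottom" block, the key observation is that $\langle t_1,t_2,t_3\rangle$ projects onto $\bar W\cong\mathrm{Sym}(n)$ via the CPR picture: modulo $N$, the three generators become the permutations $(1,2)(n-1,n)$, $(2,3)(4,5)\cdots(n-2,n-1)$, $(3,4)(5,6)\cdots(n-1,n)$ of $\{1,\dots,n\}$, and I would check these generate $\mathrm{Sym}(n)$ (they act transitively and contain, e.g., a transposition-like element in the subgroup generated). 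Then $t_1$ has the extra "sign-change" ingredient $(n-1,2n-1)(n,2n)$, so $t_1\in N\cdot(\text{top part})$ with a genuine sign change; conjugating around, one sees $\langle t_1,t_2,t_3\rangle\cap N\not\le Z(W)$, whence Lemma~\ref{intN} gives $\langle t_1,t_2,t_3\rangle=W\cong\mathrm{D}_n$. I expect the order of $t_1t_2$ to be $12$ (a single long cycle of length $12$ once one traces the interaction of the $4$-cycle-like piece on $\{1,2,n+1,n+2\}$ with the matching sign changes on $\{n-1,n,2n-1,2n\}$ through $t_2$'s chain), and the order of $t_2t_3$ to be $n-1$ (the two chains $(2,3)(4,5)\cdots$ and $(3,4)(5,6)\cdots$ multiply to an $(n-1)$-cycle on $\{2,\dots,n\}$, mirrored on the bottom block).

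The intersection property is the heart of the matter. Here $\langle t_2,t_3\rangle$ is a dihedral group of order $2(n-1)$ (generated by two involutions whose product has order $n-1$), and $\langle t_1,t_2\rangle$ is dihedral of order $24$. I would argue $\langle t_1,t_2\rangle\cap\langle t_2,t_3\rangle=\langle t_2\rangle$ by examining orbits on $\{1,\dots,2n\}$: the element $t_1t_2$ and its powers move the point $1$ (and the block $\{1,2,n+1,n+2\}$ and the "tail" $\{n-1,n,2n-1,2n\}$), whereas every element of $\langle t_2,t_3\rangle$ fixes both $1$ and $n+1$; hence any element common to both dihedral groups must fix $1$, and within $\langle t_1,t_2\rangle$ the only such elements are $1$ and $t_2$ (here I would check that the subgroup of $\langle t_1,t_2\rangle\cong\mathrm{Dih}(24)$ fixing the point $1$ is exactly $\langle t_2\rangle$, which follows since $t_1$ moves $1$ and the rotation $t_1t_2$ moves $1$, so the point stabilizer is a reflection subgroup of order $2$, namely $\langle t_2\rangle$). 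An alternative, if the orbit bookkeeping is delicate because $t_2$ also moves $1$... it does not: $t_2$ fixes $1$ and $n+1$, which is exactly why this works cleanly. So the point stabilizer argument is clean.

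The main obstacle I anticipate is purely computational care in the degenerate small-index overlaps — confirming that $12$ and $n-1$ are indeed coprime-free of hidden coincidences (for instance when $n-1$ shares factors with $12$, the dihedral groups could in principle intersect in more than $\langle t_2\rangle$, so the point-stabilizer argument, which is index-free, is the safe route rather than an order count) — and in writing the cycle structure of $t_1t_2$ correctly to pin down the order as exactly $12$ rather than $6$ or $24$. Once those two orders and the single point-stabilizer computation are in hand, the lemma follows by assembling: involutions (clear), string property ($t_1,t_3$ disjoint), $\langle t_1,t_2,t_3\rangle\cong\mathrm{D}_n$ (Lemma~\ref{intN}), and the intersection property (point stabilizer of $1$ in $\langle t_1,t_2\rangle$), with the Schl\"afli type read off as $\{12,n-1\}$.
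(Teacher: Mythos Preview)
Your overall strategy is the same as the paper's, but the execution of the intersection property contains a real gap. You propose to prove $\langle t_1,t_2\rangle\cap\langle t_2,t_3\rangle=\langle t_2\rangle$ by observing that every element of $\langle t_2,t_3\rangle$ fixes the point $1$, and then asserting that the stabilizer of $1$ in $\langle t_1,t_2\rangle\cong\mathrm{Dih}(24)$ is just $\langle t_2\rangle$. This is false: the orbit of $1$ under $\langle t_1,t_2\rangle$ is only $\{1,2,3\}$ (trace it: $t_1t_2$ acts on $\{1,2,3\}$ as a $3$-cycle), so the point stabilizer has index $3$ and hence order $8$, generated for instance by $t_2$ and $(t_1t_2)^3$. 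The paper closes this gap by using a second constraint simultaneously: $\langle t_2,t_3\rangle$ not only fixes $1$ but also lies inside the ``diagonal'' $S\cong\mathrm{Sym}(n)$ (no sign changes). The subgroup $\langle t_1,t_2\rangle\cap S$ has order $6$, and intersecting this with the order-$8$ stabilizer of $1$ gives $\langle t_2\rangle$. You need both constraints.

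There are also a few smaller slips worth fixing. First, $t_1$ and $t_3$ are \emph{not} disjointly supported: both move $n-1,n,2n-1,2n$. They do commute, but you have to check it honestly (compute $t_1t_3$ on those four points and see it is an involution). Second, your image $\bar t_1\in\overline W\cong\mathrm{Sym}(n)$ is $(1,2)$, not $(1,2)(n-1,n)$: the piece $(n-1,2n-1)(n,2n)$ of $t_1$ is a sign change and dies in the quotient. Third, $t_1t_2$ is not a single $12$-cycle; its cycle type contains a $3$-cycle on $\{1,2,3\}$, a $4$-cycle on $\{n-2,n-1,2n-2,2n-1\}$, and various transpositions, with $\mathrm{lcm}=12$. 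None of these three points breaks the proof, but the point-stabilizer miscount does.
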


\begin{proof}
Each of the elements $t_1,t_2,t_3$ are involutions as they are defined as products of pairwise disjoint transpositions. Note that
\begin{align*}
t_1 t_2 &= (1,3,2)(n-1,2n-2,2n-1,n-2)(n,2n) \\ &\phantom{=}\quad (4,5)\cdots(n-4,n-3)(n+4,n+5)\cdots(2n-4,2n-3), \\
t_2 t_3 &= (2,4,\dots,n,n-1,n-3,\dots,3)(n+2,n+4,\dots,2n,2n-1,2n-3,\dots,n+3), \\
t_1 t_3 &= (1,2)\cdots(n-3,n-2)(n-1,2n)(n,2n-1).
%t_1 t_2 &= (1,3,\cdots,n-1,n-2,n-4,\cdots,2)(n+1,n+3,\cdots,2n-1,2n-2,2n-4,\cdots,2), \\
%t_2 t_3 &= (1,n+1)(2,3,n+2,n+3)(n-2,n,n-1)(2n-2,2n,2n-1)\prod_{k=4}^{n-4} (k,k+1), \\
%t_1 t_3 &= (1,n+2)(2,n+1)(3,4)(5,6)\cdots(n-1,n).
\end{align*}
Since $\langle t_1,t_2 \rangle \cong \mathrm{Dih}(24)$, its elements are of the form ${t_1}^\varepsilon (t_1 t_2)^k$ for some integers $\varepsilon \in \{0,1\}$ and $0 \leq k \leq 11$. Also since $\langle t_2,t_3 \rangle \leq S \cap \mathrm{Stab}_{\mathrm{Sym}(2n)}(1)$, we have
\begin{eqnarray*}
\langle t_1,t_2 \rangle \cap \langle t_2,t_3 \rangle &\leq& \langle t_1,t_2 \rangle \cap \big(S \cap \mathrm{Stab}_{\mathrm{Sym}(2n)}(1)\big) \\
&=& (\langle t_1,t_2 \rangle \cap S) \cap \big(\langle t_1,t_2 \rangle \cap \mathrm{Stab}_{\mathrm{Sym}(2n)}(1) \big) \\
&=& \langle t_1,(t_1 t_2)^4 \rangle \cap \langle t_1,(t_1 t_2)^3 \rangle = \langle t_2 \rangle.
\end{eqnarray*}
It follows that $\langle t_1,t_2 \rangle \cap \langle t_2,t_3 \rangle = \langle t_2 \rangle$ and so $\langle t_1,t_2,t_3 \rangle$ is a string C-group with Schl\"{a}ffli type $\{12,n-1\}$. We now show that $H=\langle t_1,t_2,t_3 \rangle$ is isomorphic to $\mathrm{D}_n$. Note that $\overline{H}=S$ by Theorem \ref{skeleton} and so $HN=\mathrm{D}_n$. Since $$(n-2,2n-2)(n-1,2n-1)=(t_1 t_2)^6 \in N$$ and  $(n-2,2n-2)(n-1,2n-1) \notin Z(\mathrm{D}_n)$. Then calling upon Lemma \ref{intN} yields $H=\mathrm{D}_n$.
\end{proof}

We now construct rank $r$ C-strings of $\mathrm{D}_n$ for every $4 \leq r \leq n-1$ as follows. We use Theorem \ref{skeleton} to construct a rank $r-1$ C-string for the subgroup $S \cong \mathrm{Sym}(n)$ of $\mathrm{D}_n$, and then append an element in $N$. We note that the construction depends on the parity of $r$.

If $r$ is odd, we define $t_1,\dots,t_r$ in $\mathrm{Sym}(2n)$ as follows.

\begin{align*}
\begin{split}
t_i &= (i,i+1)(n+i,n+i+1) \ \text{for every $1 \leq i \leq r-3$}, \\
t_{r-2} &= (r-2,r-1)(r,r+1)\cdots(n-1,n) \\ &\phantom{=}\quad (n+r-2,n+r-1)(n+r,n+r+1)\cdots(2n-1,2n), \\
t_{r-1} &= (r-1,r)(r+1,r+2)\cdots(n-2,n-1) \\ &\phantom{=}\quad (n+r-1,n+r)(n+r+1,n+r+2)\cdots(2n-2,2n-1), \\
t_r &= (n-1,2n-1)(n,2n).
%t_1 &= (1,2)(3,4)\cdots(n-r+1,n-r+2), \\
%t_2 &= (2,3)(4,5)\cdots(n-r+2,n-r+3), \\
%t_i &= (n-r+i,n-r+i+1) \ \text{for every $3 \leq i \leq r-1$}.
\end{split}
\end{align*}

\begin{center}
\begin{tikzpicture}[xscale=1.4,yscale=1.4]
	\vx (a) at (-4,0) {}; \node at (-3.5,0.25) {$1$};
	\vx (b) at (-3,0) {}; \node at (-2.5,0.25) {$2$};
	\vx (c) at (-2,0) {}; \node at (-1.5,0) {$\boldsymbol{\cdots}$};
	\vx (d) at (-1,0) {}; \node at (-0.5,0.25) {$r-2$};
	\vx (e) at (0,0) {}; \node at (0.5,0.25) {$r-1$};
	\vx (x) at (1,0) {}; \node at (1.5,0) {$\boldsymbol{\cdots}$};
	\vx (A) at (2,0) {}; \node at (2.5,0.25) {$r-1$};
	\vx (B) at (3,0) {}; \node at (3.5,0.25) {$r-2$};
	\vx (C) at (4,0) {};
  \draw (a) -- (b) -- (c) (d) -- (e) -- (x) (A) -- (B) -- (C);
	\vx (-a) at (-4,-1) {}; \node at (-3.5,-1.25) {$1$};
	\vx (-b) at (-3,-1) {}; \node at (-2.5,-1.25) {$2$};
	\vx (-c) at (-2,-1) {}; \node at (-1.5,-1) {$\boldsymbol{\cdots}$};
	\vx (-d) at (-1,-1) {}; \node at (-0.5,-1.25) {$r-2$};
	\vx (-e) at (0,-1) {}; \node at (0.5,-1.25) {$r-1$};
	\vx (-x) at (1,-1) {}; \node at (1.5,-1) {$\boldsymbol{\cdots}$};
	\vx (-A) at (2,-1) {}; \node at (2.5,-1.25) {$r-1$};
	\vx (-B) at (3,-1) {}; \node at (3.5,-1.25) {$r-2$};
	\vx (-C) at (4,-1) {};
  \draw (-a) -- (-b) -- (-c) (-d) -- (-e) -- (-x) (-A) -- (-B) -- (-C);
  	\node at (2.8,-0.5) {$r$}; \node at (3.8,-0.5) {$r$};
  \draw (B) -- (-B) (C) -- (-C);
\end{tikzpicture}
\end{center}

If $r$ is even, we define $t_1,\dots,t_r$ in $\mathrm{Sym}(2n)$ as follows.

\begin{align*}
\begin{split}
t_i &= (i,i+1)(n+i,n+i+1) \ \text{for every $1 \leq i \leq r-3$}, \\
t_{r-2} &= (r-2,r-1)(r,r+1)\cdots(n-2,n-1)(n+r-2,n+r-1) \\ &\phantom{=}\quad (n+r,n+r+1)\cdots(2n-2,2n-1), \\
t_{r-1} &= (r-1,r)(r+1,r+2)\cdots(n-1,n)(n+r-1,n+r)\cdots(2n-1,2n), \\
t_r &= (n-2,2n-2)(n-1,2n-1).
%t_1 &= (2,3)(4,5)\cdots(n-r+1,n-r+2), \\
%t_2 &= (1,2)(3,4)\cdots(n-r+2,n-r+3), \\
%t_i &= (n-r+i,n-r+i+1) \ \text{for every $3 \leq i \leq r-1$}.
\end{split}
\end{align*}

\begin{center}
\begin{tikzpicture}[xscale=1.4,yscale=1.4]
	\vx (a) at (-4,0) {}; \node at (-3.5,0.25) {$1$};
	\vx (b) at (-3,0) {}; \node at (-2.5,0.25) {$2$};
	\vx (c) at (-2,0) {}; \node at (-1.5,0) {$\boldsymbol{\cdots}$};
	\vx (d) at (-1,0) {}; \node at (-0.5,0.25) {$r-2$};
	\vx (e) at (0,0) {}; \node at (0.5,0.25) {$r-1$};
	\vx (x) at (1,0) {}; \node at (1.5,0) {$\boldsymbol{\cdots}$};
	\vx (A) at (2,0) {}; \node at (2.5,0.25) {$r-2$};
	\vx (B) at (3,0) {}; \node at (3.5,0.25) {$r-1$};
	\vx (C) at (4,0) {};
  \draw (a) -- (b) -- (c) (d) -- (e) -- (x) (A) -- (B) -- (C);
	\vx (-a) at (-4,-1) {}; \node at (-3.5,-1.25) {$1$};
	\vx (-b) at (-3,-1) {}; \node at (-2.5,-1.25) {$2$};
	\vx (-c) at (-2,-1) {}; \node at (-1.5,-1) {$\boldsymbol{\cdots}$};
	\vx (-d) at (-1,-1) {}; \node at (-0.5,-1.25) {$r-2$};
	\vx (-e) at (0,-1) {}; \node at (0.5,-1.25) {$r-1$};
	\vx (-x) at (1,-1) {}; \node at (1.5,-1) {$\boldsymbol{\cdots}$};
	\vx (-A) at (2,-1) {}; \node at (2.5,-1.25) {$r-2$};
	\vx (-B) at (3,-1) {}; \node at (3.5,-1.25) {$r-1$};
	\vx (-C) at (4,-1) {};
  \draw (-a) -- (-b) -- (-c) (-d) -- (-e) -- (-x) (-A) -- (-B) -- (-C);
  	\node at (1.8,-0.5) {$r$}; \node at (2.8,-0.5) {$r$};
  \draw (A) -- (-A) (B) -- (-B);
\end{tikzpicture}
\end{center}

\begin{lemma} \label{csr2}
Let $n$ be even and $n \ge 6$. Then for every $4 \leq r \leq n-1$, $\{t_1,\dots,t_r\}$ is a rank $r$ C-string of $\mathrm{D}_n$ with Schl\"{a}fli type $\{3^{r-4},6,n-r+3,4\}$.
\end{lemma}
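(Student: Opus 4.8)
The plan is to mirror the strategy already used for Lemmas \ref{csp2}, \ref{csp3}, and \ref{csr1}: first verify the permutation-theoretic bookkeeping (that the $t_i$ are involutions satisfying the string property with the claimed Schl\"{a}fli type), then show $\langle t_1,\dots,t_{r-1}\rangle \cong \mathrm{Sym}(n)$ is a string C-group via Theorem \ref{skeleton}, and finally append $t_r \in N$ and invoke Lemma \ref{ipdn} to conclude that the whole set is a rank $r$ C-string generating $\mathrm{D}_n$. Throughout I would handle the two parity cases for $r$ in parallel, since the arguments are structurally identical and differ only in which two points of $\{1,\dots,n\}$ carry the sign change $t_r$.

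First I would record that each $t_i$ is a product of pairwise disjoint transpositions, hence an involution, and that for $|i-j|\ge 2$ the permutations $t_i,t_j$ commute: the only potential overlaps are among $t_{r-3},t_{r-2},t_{r-1}$ and between $t_r$ and $t_{r-2},t_{r-1}$, all of which are disjoint from the remaining generators by inspection of the supports. Then I would compute the orders of the consecutive products $t_it_{i+1}$: for $1\le i\le r-4$ one gets a product of disjoint $3$-cycles (order $3$); $t_{r-3}t_{r-2}$ is again built from $3$-cycles (order $3$); $t_{r-2}t_{r-1}$ is a single long cycle of the right length on each of the two blocks $\{1,\dots,n\}$ and $\{n+1,\dots,2n\}$, giving order $n-r+3$ (this is exactly the "$6$ then long tail" behaviour inherited from the CPR graph of Theorem \ref{skeleton}, where the label-$(r-3)$ edge meets the label-$(r-2)$ edge at a degree-three vertex); and $t_{r-1}t_r$ has order $4$ since $t_r$ overlaps $t_{r-1}$ in exactly one transposition on each block, producing disjoint $4$-cycles. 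This yields the Schl\"{a}fli type $\{3^{r-4},6,n-r+3,4\}$. Here I must double-check the small-$r$ boundary, $r=4$: then $t_1=t_{r-3}$, the $3^{r-4}$ part is empty, and the type reads $\{6,n-1,4\}$, which I should confirm directly matches the $r=4$ specialisation of the formulas.

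Next, using Theorem \ref{skeleton} with $d=r-1$, the subgraph on labels $1,\dots,r-1$ restricted to the block $\{1,\dots,n\}$ (equivalently, the image in $\overline{W}=S\cong\mathrm{Sym}(n)$) is precisely the CPR graph of the rank-$(r-1)$ C-string of $\mathrm{Sym}(n)$ of Schl\"{a}fli type $\{3^{r-4},6,n-r+3\}$ described there; hence $G_{1,\dots,r-1}$ maps onto $S$, and since $t_1,\dots,t_{r-1}$ all lie in $S$ (they involve no transposition of the form $(i,n+i)$) we get $G_{1,\dots,r-1}=S$, a string C-group. Finally $t_r=(n-1,2n-1)(n,2n)$ (resp. $(n-2,2n-2)(n-1,2n-1)$) is a product of two sign changes, so $t_r\in N\setminus Z(G)$ since $n\ge 6$ means $N$ contains no single transposition $(i,n+i)$ and $Z(\mathrm{D}_n)$ is the all-signs element, which $t_r$ is not. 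Lemma \ref{ipdn} then gives at once that $G=\langle t_1,\dots,t_r\rangle=\mathrm{D}_n$ and that it is a string C-group.

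The main obstacle I anticipate is the order computation for $t_{r-2}t_{r-1}$: one must track how the "short" alternating transposition pattern of $t_{r-1}$ interleaves with the "long" alternating pattern of $t_{r-2}$ on the segment from position $r-2$ to $n$, and verify that on each block this composes to a single cycle of length $n-r+3$ rather than a product of shorter cycles (and, at the junction near position $r-2$, that it is this product — not $t_{r-3}t_{r-2}$ — that realises the label "$6$" in the Schl\"{a}fli symbol coming from Theorem \ref{skeleton}). Getting the indexing exactly right at both ends of that segment, and checking it degrades gracefully at $r=4$ and at $r=n-1$, is the fiddly part; everything else is a routine application of the machinery already assembled in Section \ref{pre} and the earlier lemmas of this section.
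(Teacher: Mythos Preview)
Your overall strategy is exactly that of the paper: verify that the $t_i$ are involutions satisfying the string relations, observe that $\langle t_1,\dots,t_{r-1}\rangle = S$ is the rank-$(r-1)$ string C-group for $\mathrm{Sym}(n)$ supplied by Theorem~\ref{skeleton} (with $d=r-1$), note $t_r\in N\setminus Z(W)$, and invoke Lemma~\ref{ipdn}. The paper's own proof is in fact terser than yours---it only writes out the products $t_k t_r$ explicitly, relying silently on Theorem~\ref{skeleton} for the Schl\"{a}fli entries $\{3^{r-4},6,n-r+3\}$ among $t_1,\dots,t_{r-1}$.

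There is one slip in your bookkeeping. You claim $t_{r-3}t_{r-2}$ is ``built from $3$-cycles (order $3$)'', and in your final paragraph you speculate that perhaps $t_{r-2}t_{r-1}$ carries the label ``$6$''. In fact the order-$6$ entry is $t_{r-3}t_{r-2}$: on each block this product is the $3$-cycle $(r-3,r-1,r-2)$ together with the leftover disjoint transpositions $(r,r+1)(r+2,r+3)\cdots$ coming from the tail of $t_{r-2}$, so its order is $\mathrm{lcm}(3,2)=6$. The product $t_{r-2}t_{r-1}$ then really does give a single $(n-r+3)$-cycle on each block, as you say. Once this indexing is corrected your argument goes through unchanged.
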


\begin{proof}
Each of the elements $t_1,\dots,t_r$ are involutions as they are defined as products of pairwise disjoint transpositions. Note that for every $1 \leq k \leq r-3$, we have
\begin{eqnarray*}
t_k t_r = 
\begin{cases}
(k,k+1)(n+k,n+k+1)(n-1,2n-1)(n,2n) \ &, \text{ if $r$ is odd } \\ 
(k,k+1)(n+k,n+k+1)(n-2,2n-2)(n-1,2n-1) \ &, \text{ if $r$ is even } \\
\end{cases},
\end{eqnarray*}
which are products of pairwise disjoint transpositions because 
\begin{eqnarray*}
    k+1 \leq (r-3)+1 \leq ((n-1)-3)+1 = n-3 < n-2.
\end{eqnarray*}
Likewise, if $r$ is odd we have
\begin{eqnarray*}
t_{r-2} t_r &=& (n-1,2n)(n,2n-1) (r-2,r-1)(r,r+1)\cdots(n-3,n-2) \\ &\phantom{=}& \quad(n+r-2,n+r-1)(n+r,n+r+1)\cdots(2n-3,2n-2),
\end{eqnarray*}
whereas if $r$ is even we have
\begin{eqnarray*}
t_{r-2} t_r &=& (n-2,2n-1)(n-1,2n-2)(r-2,r-1)(r,r+1)\cdots(n-4,n-3) \\ &\phantom{=}&\quad (n+r-2,n+r-1)(n+r,n+r+1)\cdots(2n-4,2n-3),
\end{eqnarray*}
which are also products of pairwise disjoint transpositions. Finally, we have
\begin{eqnarray*}
t_{r-1} t_r &=& (n-1,n-2,2n-1,2n-2)(n,2n) \\ &\phantom{=}& \quad(r-1,r)(r+1,r+2)\cdots(n-4,n-3) \\ &\phantom{=}& \quad (n+r-1,n+r)(n+r+1,n+r+2)\cdots(2n-4,2n-3)
\end{eqnarray*}
for $r$ odd and we also have
\begin{eqnarray*}
t_{r-1} t_r &=& (n-2,n-3,2n-2,2n-3)(n-1,n,2n-1,2n) \\ &\phantom{=}&\quad (r-1,r)(r+1,r+2)\cdots(n-5,n-4) \\ &\phantom{=}&\quad (n+r-1,n+r)(n+r+1,n+r+2)\cdots(2n-5,2n-4),
\end{eqnarray*}
for $r$ even. It follows from Lemma \ref{ipdn} that $\langle t_1,\dots,t_r \rangle=\mathrm{D}_n$ is a string C-group with Schl\"{a}fli type $\{3^{r-4},6,n-r+3,4\}$.
\end{proof}

\begin{theorem}\label{neven}
Let $n$ be even and $n \ge 6$. Then $r_{\mathrm{max}}(\mathrm{D}_n)=n-1$ and there is a rank $r$ C-string of $\mathrm{D}_n$ for every $3 \leq r \leq n-1$.
\end{theorem}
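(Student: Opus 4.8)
The plan is to assemble Theorem \ref{neven} from the pieces already established in this section. The statement has two parts: the value $r_{\mathrm{max}}(\mathrm{D}_n) = n-1$ when $n$ is even, and the existence of a C-string of every intermediate rank $r$ with $3 \leq r \leq n-1$. For the second part, the work is essentially done: Lemma \ref{csr1} supplies a rank $3$ C-string of $\mathrm{D}_n$, and Lemma \ref{csr2} supplies a rank $r$ C-string of $\mathrm{D}_n$ for every $4 \leq r \leq n-1$. Together these cover the full range $3 \leq r \leq n-1$, so that half of the theorem follows immediately by citing those two lemmas.

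For the first part I would argue as follows. The existence of the rank $n-1$ C-string from Lemma \ref{csr2} (taking $r = n-1$, which is allowed since $n \geq 6$) shows $r_{\mathrm{max}}(\mathrm{D}_n) \geq n-1$. For the reverse inequality, Theorem \ref{Dnneven} gives $r_{\mathrm{max}}(\mathrm{D}_n) < n$ for even $n \geq 6$, i.e. $r_{\mathrm{max}}(\mathrm{D}_n) \leq n-1$. Combining the two inequalities yields $r_{\mathrm{max}}(\mathrm{D}_n) = n-1$ exactly. One should also note that regarding $W \cong \mathrm{D}_n$ as a transitive subgroup of $\mathrm{Sym}(2n)$ via Lemma \ref{bndn}, Theorem \ref{transrank} already bounds the rank by $n$ (the exceptional case (ii) of that theorem, $\mathrm{B}_n$, does not arise), but the strict improvement to $n-1$ in the even case is exactly the content of Theorem \ref{Dnneven}.

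The proof is therefore a short bookkeeping argument with no real obstacle: the substantive content lives entirely in Theorem \ref{Dnneven} (the upper bound) and in Lemmas \ref{csr1} and \ref{csr2} (the constructions realizing every rank up to $n-1$). If I were to worry about anything, it would only be to double-check the boundary cases --- that $n = 6$ is genuinely covered (Lemma \ref{csr1} needs $n \geq 6$, Lemma \ref{csr2} needs $n \geq 6$ and $4 \leq r \leq n-1$, so for $n=6$ the ranks $3, 4, 5$ are all produced) --- but these have been arranged to match. So the write-up is simply: cite Lemma \ref{csr2} with $r = n-1$ for the lower bound on $r_{\mathrm{max}}$, cite Theorem \ref{Dnneven} for the upper bound, conclude equality, and then cite Lemmas \ref{csr1} and \ref{csr2} for the intermediate ranks.
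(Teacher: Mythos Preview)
Your proposal is correct and matches the paper's own proof, which simply reads ``The theorem follows from Lemmas \ref{csr1}, \ref{csr2} and Theorem \ref{Dnneven}.'' Your additional remark about Theorem \ref{transrank} is not needed here (the upper bound $r_{\mathrm{max}} < n$ is already the full content of Theorem \ref{Dnneven}), but it does no harm.
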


\begin{proof}
The theorem follows from Lemmas \ref{csr1}, \ref{csr2} and Theorem \ref{Dnneven}.
\end{proof}

Together Theorems \ref{Dnneven}, \ref{nodd} and \ref{neven} prove Theorems \ref{maximalrank} and \ref{intermediateranks}.

\section{Exceptional Finite Irreducible Coxeter Groups} \label{census}

The following table lists the number of abstract regular polytopes (up to isomorphism and duality with the number of self-dual polytopes in brackets) for each exceptional Coxeter groups computed using \textsc{Magma} \cite{magma}.

\begin{table}[h!] \caption{Number of polytopes for each exceptional Coxeter group.} %list of rank for exceptional case not covered
\centering
\begin{tabular}{|c|c|c|c|c|c|c|c|} \hline
Group & Total & Rank 3 & Rank 4 & Rank 5 & Rank 6 & Rank 7 & Rank $\ge8$ \\ \hline
$\mathrm{H}_3$ & 8(1) & 8(1) & 0 & 0 & 0 & 0 & 0 \\ \hline
$\mathrm{H}_4$ & 59(6) & 45(2) & 14(4) & 0 & 0 & 0 & 0 \\ \hline
$\mathrm{F}_4$ & 5(1) & 3(0) & 2(1) & 0 & 0 & 0 & 0 \\ \hline
$\mathrm{E}_6$ & 147(18) & 87(12) & 50(4) & 10(2) & 0 & 0 & 0 \\ \hline
$\mathrm{E}_7$ & 3662(10) & 1577(10) & 1525(0) & 465(0) & 95(0) & 0 & 0 \\ \hline
$\mathrm{E}_8$ & 11689(142) & 6746(117) & 3584(22) & 986(2) & 310(0) & 63(1) & 0 \\ \hline
\end{tabular}
\end{table}

%\section*{Acknowledgements}

\begin{bibdiv}
	\begin{biblist}
	
\bib{magma}{article}{
    AUTHOR = {Bosma, Wieb},
   AUTHOR = {Cannon, John},
   AUTHOR = {Playoust, Catherine},
     TITLE = {The Magma algebra system. I. The user language},
   JOURNAL = {J. Symbolic Comput.}, 
    VOLUME = {24},
      YEAR = {1997},
    NUMBER = {3--4},
     PAGES = {235--265},
}

\bib{newpaper}{article}{
	title = {On the ranks of string C-group representations for symplectic and orthogonal groups. \emph{Polytopes and Discrete Geometry}},
	volume = {764},
	journal = {Contemp. Math.},
	author = {Brooksbank, Peter A.},
	year = {2021},
	pages = {31--41},
	publisher = {Amer. Math. Soc.}
}

\bib{p2}{article}{
	title = {Orthogonal groups in characteristic 2 acting on polytopes of high rank},
	volume = {63},
	number = {3},
	journal = {Discrete Comput. Geom.},
	author = {Brooksbank, Peter A.},
	author = {Ferrara, J. T.},
	author = {Leemans, Dimitri},
	year = {2020},
	pages = {656--669}
}

\bib{p3}{article}{
	title = {Polytopes of large rank for $\mathrm{PSL}(4,\mathbb{F}_q)$},
	volume = {452},
	journal = {J. Algebra},
	author = {Brooksbank, Peter A.},
	author = {Leemans, Dimitri},
	year = {2016},
	pages = {390--400}
}

\bib{rankred}{article}{
	title = {Rank reduction of string C-group representations},
	volume = {147},
	number = {12},
	journal = {Proc. Amer. Math. Soc.},
	author = {Brooksbank, Peter A.},
	author = {Leemans, Dimitri},
	year = {2019},
	pages = {5421--5426},
}

\bib{p5}{article}{
	title = {Three-dimensional classical groups acting on polytopes},
	volume = {44},
	number = {3},
	journal = {Discrete Comput. Geom.},
	author = {Brooksbank, Peter A.},
	author = {Vicinsky, D. A.},
	year = {2010},
	pages = {654--659},
}

\bib{butlermckay}{article}{
	title = {The transitive groups of degree up to eleven},
	volume = {11},
	number = {8},
	journal = {Comm. Algebra},
	author = {Butler, Gregory},
	author = {McKay, John},
	year = {1983},
	pages = {863--911}
}

\bib{p7}{article}{
	title = {Independent generating sets and geometries for symmetric groups},
	volume = {258},
	number = {2},
	journal = {J. Algebra},
	author = {Cameron, Peter J.},
	author = {Cara, P.},
	year = {2002},
	pages = {641--650}
}
	
\bib{transrank}{article}{
	title = {String C-groups as transitive subgroups of $\mathrm{S}_n$},
	volume = {447},
	journal = {J. Algebra},
	author = {Cameron, Peter J.},
	author = {Fernandes, Maria Elisa},
	author = {Leemans, Dimitri},
	author = {Mixer, Mark},
	year = {2016},
	pages = {468--478}
}

\bib{altrank}{article}{
	title = {Highest rank of a polytope for $A_n$},
	volume = {115},
	number = {1},
	journal = {Proc. Lond. Math. Soc. (3)},
	author = {Cameron, Peter J.},
	author = {Fernandes, Maria Elisa},
	author = {Leemans, Dimitri},
	author = {Mixer, Mark},
	year = {2017},
	pages = {135--176}
}

\bib{p14}{article}{
	title = {Almost simple groups with socle $\mathrm{PSL}(2,q)$ acting on abstract regular polytopes},
	volume = {423},
	journal = {J. Algebra},
	author = {Connor, T.},
	author = {De Saedeleer, J.},
	author = {Leemans, Dimitri},
	year = {2015},
	pages = {550--558}
}

\bib{Coxeter}{article}{
	title = {The complete enumeration of finite groups of the form $R_i^2=(R_i R_j)^{k_{ij}}=1$},
	volume = {10},
	number = {1},
	journal = {J. Lond. Math. Soc.},
	author = {Coxeter, H. S. M.},
	year = {1935},
	pages = {21--25}
}

\bib{p20}{article}{
	title = {Polytopes of high rank for the symmetric groups},
	volume = {228},
	number = {6},
	journal = {Adv. Math.},
	author = {Fernandes, Maria Elisa},
	author = {Leemans, Dimitri},
	year = {2011},
	pages = {3207--3222},
}

\bib{p22}{article}{
	title = {String C-group representations of alternating groups},
	volume = {17},
	number = {1},
	journal = {Ars Math. Contemp.},
	author = {Fernandes, Maria Elisa},
	author = {Leemans, Dimitri},
	year = {2019},
	pages = {291-310}
}

\bib{interesting}{article}{
    title = {Regular polytopes of rank $n/2$ for transitive groups of degree $n$},
    author = {Fernandes, Maria Elisa},
    author = {Piedade, Claudio Alexandre},
    year = {2024},
    eprint = {arXiv: 2407.16003}
}

\bib{p29}{article}{
	title = {An atlas of small regular abstract polytopes},
	volume = {53},
	number = {1--2},
	journal = {Period. Math. Hungar.},
	author = {Hartley, Michael I.},
	year = {2006},
	pages = {149--156}
}

\bib{p30}{article}{
	title = {Polytopes derived from sporadic simple groups},
	volume = {5},
	number = {2},
	journal = {Contrib. Discrete Math.},
	author = {Hartley, Michael I.},
	author = {Hulpke, Alexander},
	year = {2010},
	pages = {106--118},
}
	
\bib{coxbible}{book}{
	title = {Reflection Groups and Coxeter Groups},
	publisher = {Cambridge University Press},
	author = {Humphreys, James E.},
	year = {1990}
}

\bib{p36}{article}{
	title = {Almost simple groups of Suzuki type acting on polytopes},
	volume = {134},
	number = {12},
	journal = {Proc. Amer. Math. Soc.},
	author = {Leemans, Dimitri},
	year = {2006},
	pages = {3649--3651}
}

\bib{arpsurvey}{article}{
	title = {String C-group representations of almost simple groups: a survey. \emph{Polytopes and Discrete Geometry}},
	volume = {764},
	journal = {Contemp. Math.},
	author = {Leemans, Dimitri},
	year = {2021},
	pages = {157--178},
	publisher = {Amer. Math. Soc.}
}

\bib{p37}{article}{
	title = {Algorithms for classifying regular polytopes with a fixed automorphism group},
	volume = {7},
	number = {2},
	journal = {Contrib. Discrete Math.},
	author = {Leemans, Dimitri},
	author = {Mixer, Mark},
	year = {2012},
	pages = {105--118},
}

\bib{p38}{article}{
	title = {The string C-group representations of the Suzuki, Rudvalis and O'Nan sporadic groups},
	volume = {5},
	number = {3},
	journal = {Art. Discrete Appl. Math.},
	author = {Leemans, Dimitri},
	author = {Mulpas, Jessica},
	year = {2022},
	pages = {Article \#P3.09, 12 pp.},
}

\bib{p39}{article}{
	title = {Groups of type $L_2(q)$ acting on polytopes},
	volume = {7},
	number = {4},
	journal = {Adv. Geom.},
	author = {Leemans, Dimitri},
	author = {Schulte, Egon},
	year = {2007},
	pages = {529--539}
}

\bib{p40}{article}{
	title = {Polytopes with groups of type $\mathrm{PGL}_2(q)$},
	volume = {2},
	number = {2},
	journal = {Ars Math. Contemp.},
	author = {Leemans, Dimitri},
	author = {Schulte, Egon},
	year = {2009},
	pages = {163--171}
}

\bib{p41}{article}{
	title = {Groups of Ree type in characteristic 3 acting on polytopes},
	volume = {14},
	number = {2},
	journal = {Ars Math. Contemp.},
	author = {Leemans, Dimitri},
	author = {Schulte, Egon},
	author = {Van Maldeghem, H.},
	year = {2018},
	pages = {209--226}
}

\bib{p42}{article}{
	title = {An atlas of abstract regular polytopes for small groups},
	volume = {72},
	number = {3},
	journal = {Aequationes Math.},
	author = {Leemans, Dimitri},
	author = {Vauthier, Laurence},
	year = {2006},
	pages = {313--320}
}

\bib{arpbible}{book}{
	title = {Abstract Regular Polytopes},
	publisher = {Cambridge University Press},
	author = {McMullen, Peter},
	author = {Schulte, Egon},
	year = {2002}
}

\bib{unravelledbn}{article}{
	title = {Two families of unravelled abstract regular polytopes in $\mathrm{B}_n$},
	volume = {25},
	number = {6},
	journal = {J. Group Theory},
	author = {Nicolaides, Robert},
	author = {Rowley, Peter},
	year = {2022},
	pages = {1133--1148}
}

\bib{cpr}{article}{
	title = {CPR graphs and regular polytopes},
	volume = {29},
	number = {1},
	journal = {European J. Combin.},
	author = {Pellicer, Daniel},
	year = {2008},
	pages = {59--71}
}

\bib{whiston}{article}{
	title = {Maximal independent generating sets of the symmetric group},
	volume = {232},
	number = {1},
	journal = {J. Algebra},
	author = {Whiston, Julius},
	year = {2000},
	pages = {255--268}
}
	
	\end{biblist}
\end{bibdiv}

\end{document}